\newtheorem{precor}{{\bf Corollary}}
\newenvironment{cor}{\begin{precor}{\hspace{-0.5
               em}{\bf.\ }}}{\end{precor}}
\newtheorem{precon}{{\bf Conjecture}}
\newenvironment{con}{\begin{precon}{\hspace{-0.5
               em}{\bf.\ }}}{\end{precon}}
\newtheorem{predefin}{{\bf Definition}}
\newenvironment{defin}[1]{\begin{predefin}{\hspace{-0.5
                   em}{\bf.\ }}{\rm #1}\hfill{$\spadesuit$}}{\end{predefin}}
\newtheorem{preexm}{{\bf Example}}
\newtheorem{preappl}{{\bf Application}}
\newtheorem{prelem}{{\bf Lemma}}
\newenvironment{lem}{\begin{prelem}{\hspace{-0.5
               em}{\bf.\ }}}{\end{prelem}}
\newtheorem{preproof}{{\bf Proof.\ }}
\newenvironment{proof}[1]{\begin{preproof}{\rm
               #1}\hfill{$\blacksquare$}}{\end{preproof}}
\newtheorem{prethm}{{\bf Theorem}}
\newenvironment{thm}{\begin{prethm}{\hspace{-0.5
               em}{\bf.\ }}}{\end{prethm}}
\newtheorem{prealphthm}{{\bf Theorem}}
\newenvironment{alphthm}{\begin{prealphthm}{\hspace{-0.5
               em}{\bf.\ }}}{\end{prealphthm}}
\newtheorem{prealphlem}{{\bf Lemma}}
\newenvironment{alphlem}{\begin{prealphlem}{\hspace{-0.5
               em}{\bf.\ }}}{\end{prealphlem}}
\newtheorem{prepro}{{\bf Proposition}}
\newtheorem{prequ}{{\bf Question}}
\newenvironment{qu}{\begin{prequ}{\hspace{-0.5
               em}{\bf.\ }}}{\end{prequ}}
\newtheorem{preprb}{{\bf Problem}}
\def\conct[#1,#2]{\mbox {${#1} \leftrightarrow {#2}$}}
\def\dconct[#1,#2]{\mbox {${#1} \rightarrow {#2}$}}
\def\deg[#1,#2]{\mbox {$d_{_{#1}}(#2)$}}
\def\mindeg[#1]{\mbox {$\delta_{_{#1}}$}}
\def\maxdeg[#1]{\mbox {$\Delta_{_{#1}}$}}
\def\outdeg[#1,#2]{\mbox {$d_{_{#1}}^{^+}(#2)$}}
\def\minoutdeg[#1]{\mbox {$\delta_{_{#1}}^{^+}$}}
\def\maxoutdeg[#1]{\mbox {$\Delta_{_{#1}}^{^+}$}}
\def\indeg[#1,#2]{\mbox {$d_{_{#1}}^{^-}(#2)$}}
\def\minindeg[#1]{\mbox {$\delta_{_{#1}}^{^-}$}}
\def\maxindeg[#1]{\mbox {$\Delta_{_{#1}}^{^-}$}}
\def\isdef{\mbox {$\ \stackrel{\rm def}{=} \ $}}
\def\dre[#1,#2,#3]{\mbox {${\cal E}_{_{#3}}(#1,#2)$}}
\def\var[#1,#2]{\mbox {${\rm Var}_{_{#1}}(#2)$}}
\def\ls[#1]{\mbox {$\xi^{^{#1}}$}}
\def\hom[#1,#2]{\mbox {${\rm Hom}({#1},{#2})$}}
\def\onvhom[#1,#2]{\mbox {${\rm Hom^{v}}(#1,#2)$}}
\def\onehom[#1,#2]{\mbox {${\rm Hom^{e}}(#1,#2)$}}
\def\core[#1]{\mbox {$#1^{^{\bullet}}$}}
\def\cay[#1,#2]{\mbox {${\rm Cay}({#1},{#2})$}}
\def\cays[#1,#2]{\mbox {${\rm Cay_{s}}({#1},{#2})$}}
\def\dirc[#1]{\mbox {$\stackrel{\rightarrow}{C}_{_{#1}}$}}
\def\cycl[#1]{\mbox {${\bf Z}_{_{#1}}$}}
\date{}
\begin{document}
\footnotetext[1]{This research was partially supported by Shahid
Beheshti University.}
\footnotetext[2]{Correspondence should be addressed to {\tt
hhaji@sbu.ac.ir}.}
\begin{center}
{\Large \bf Graph Powers and Graph Homomorphisms}\\
\vspace*{0.5cm}
{\bf Hossein Hajiabolhassan and Ali Taherkhani}\\
{\it Department of Mathematical Sciences}\\
{\it Shahid Beheshti University,  G.C.,}\\
{\it P.O. Box {\rm 19834}, Tehran, Iran}\\
{hhaji@sbu.ac.ir}\\
{a\_taherkhani@sbu.ac.ir}\\
\end{center}
\begin{abstract}
\noindent In this paper we investigate some basic properties of
fractional powers. In this regard, we show that for any rational
number $1\leq {2r+1\over 2s+1}< og(G)$, $G^{{2r+1\over
2s+1}}\longrightarrow H$ if and only if $G\longrightarrow
H^{-{2s+1\over 2r+1}}.$ Also, for two rational numbers
${2r+1\over 2s+1} < {2p+1\over 2q+1}$ and a non-bipartite graph
$G$, we show that $G^{2r+1\over 2s+1} < G^{2p+1\over 2q+1}$. In
the sequel, we introduce an equivalent definition for circular
chromatic number of graphs in terms of fractional powers. We also
present a sufficient condition for equality of chromatic number and circular chromatic number.\\

\noindent {\bf Keywords:}\ { graph homomorphism, graph coloring, circular coloring.}\\
{\bf Subject classification: 05C}
\end{abstract}
\section{Introduction}
Throughout this paper we only consider finite graphs. For a graph
$G$, let $V(G)$ and $E(G)$ denote its vertex and edge sets,
respectively. Given two graphs $G$ and $H$, a {\it homomorphism}
from $G$ to $H$ is a map $f: V(G)\longrightarrow V(H)$ such that
adjacent vertices in $G$ are mapped into adjacent vertices in
$H$, i.e., $uv \in E(G)$ implies $f(u)f(v) \in E(H)$. For
simplicity, the existence of a homomorphism is indicated by the
symbol $G \longrightarrow H$. Two graphs $G$ and $H$ are
homomorphically equivalent, denoted by $G \longleftrightarrow
H$,  if $G \longrightarrow H$ and $H\longrightarrow G$. Also, $G
< H$ means that $G \longrightarrow H$ and no homomorphism exists
from $H$ to $G$. In this terminology, we say that $H$ is a bound
for a class ${\cal C}$ of graphs, if $G \longrightarrow H$ for
all $G\in {\cal C}$. The problem of the existence of a bound with
some special properties, for a given class of graphs, has been a
subject of study in graph homomorphism.  A retract of a graph $G$
is a subgraph $H$ of $G$ such that there exists a homomorphism
$r: G \longrightarrow H$, called retraction with $r(u)=u$ for any
vertex $u$ of $H$. A core is a graph which does not retract to a
proper subgraph. Any graph is homomorphically equivalent to a
unique core. Also, the symbol $\hom[G,H]$ is used to denote the
set of all homomorphisms from $G$ to $H$ (for more on graph
homomorphisms
see \cite{DAHA1, DAHA2, HT, HN}).\\

Circular coloring, introduced by Vince \cite{VINCE}, is a model
for coloring the vertices of graphs that provides a more refined
measure of coloring difficulty than the ordinary chromatic
number. If $n$ and $d$ are positive integers with  $n \geq 2d$,
then the {\em circular complete graph} $K_{n\over d}$ is the
graph with vertex set $\{v_{_{0}}, v_{_{1}}, \ldots,
v_{_{n-1}}\}$ in which $v_{_{i}}$ is connected to $v_{_{j}}$ if
and only if $d \leq |i-j| \leq n-d$. A graph $G$ is said to be
$(n, d)$-colorable if $G$ admits a homomorphism to $K_{n\over
d}$. The {\em circular chromatic number} (also known as the {\it
star chromatic number} \cite{VINCE}) $\chi_{_{c}}(G)$ of a graph
$G$ is the minimum of those ratios $\frac{n}{d}$ for which
$gcd(n,d)=1$ and such that $G$ admits a homomorphism to
$K_{n\over d}$. It can be shown that one may only consider
onto-vertex homomorphisms \cite{ZH}. A $(n,d)$-coloring is
''circular'' in the sense that we may view the $n$ colors as
points on a circle, and the requirement for $(n,d)$-coloring is
that the colors on adjacent vertices must be at least $d$
positions apart on the circle. Zhu \cite{ZH} provides a thorough
survey of results on circular chromatic number.

As usual, we denote by $[m]$ the set $\{1, 2, \ldots, m\}$, and
denote by ${[m] \choose n}$ the collection of all $n$-subsets of
$[m]$. The {\em Kneser graph} $KG(m,n)$ is the graph with vertex
set ${[m] \choose n}$, in which $A$ is connected to $B$ if and
only if $A \cap B = \emptyset$. It was conjectured by Kneser
\cite{kne} in 1955, and proved by Lov\'{a}sz \cite{lov} in 1978,
that $\chi(KG(m,n))=m-2n+2$. The {\em Schrijver graph} $SG(m,n)$
is the subgraph of $KG(m,n)$ induced by all $2$-stable $n$-subsets
of $[m]$. It was proved by Schrijver \cite{sch} that
$\chi(SG(m,n))=\chi(KG(m,n))$ and that every proper subgraph of
$SG(m,n)$ has a chromatic number smaller than that of $SG(m,n)$.
Also, for a given graph $G$, the notation $og(G)$ stands for the
odd girth of graph $G$.

For a graph $G$, let $G^{^{k}}$ be the $k$th power of $G$, which
is obtained on the vertex set $V(G)$, by connecting any two
vertices $u$ and $v$ for which there exists a walk of length $k$
between $u$ and $v$ in $G$. Note that the $k$th power of a simple
graph is not necessarily a simple graph itself. For instance, the
$k$th power may have loops on its vertices provided that $k$ is
an even integer. The chromatic number of graph powers has been
studied in the literature (see \cite{BAST, DAHA4, GYJE, HH, SITA,
TA}).

The following simple and useful lemma was proved and used
independently in \cite{DAHA4, NEME1, TA}.

\begin{alphlem}\label{DDD}
\label{M2} Let $G$ and $H$ be two simple graphs such that
$\hom[G,H] \not = \emptyset$. Then, for any positive integer $k$,
$\hom[G^{^{k}},H^{^{k}}] \not = \emptyset$.
\end{alphlem}

Note that Lemma \ref{M2} trivially holds whenever $H^{^{k}}$
contains a loop, e.g., when $k=2$. As immediate consequences of
Lemma~\ref{M2}, we obtain $\chi_c(P)=\chi(P)$ and
$\hom[C,C_{_{7}}]=\emptyset$, where $P$ and $C$ are the Petersen
and the Coxeter graphs, respectively, see  \cite{DAHA4}.

The {\it local chromatic number} of a graph is defined in
\cite{ERFU} as the minimum number of colors that must appear
within distance $1$ of a vertex. For a given graph $G$ with
$odd(G)\geq 7$, the chromatic number of $G^{5}$ provides an upper
bound for local chromatic number of $G$ . In \cite{SITA}, it was
proved if $\chi(G^{5})\leq m$ then $\psi(G) \leq \lfloor {m\over
2}\rfloor +2$.

Now, we recall a definition from \cite{HH}.

\begin{defin}{Let $m, n,$ and $k$ be positive integers with $m\geq 2n$.
Set $H(m,n,k)$  to be the {\it helical graph} whose vertex set
contains all $k$-tuples $(A_1,\ldots ,A_k)$ such that for any
$1\leq r\leq k$, $A_r\subseteq [m], |A_1|=n, |A_r|\geq n$ and for
any $s\leq k-1$ and $t \leq k-2$, $A_s\cap A_{s+1}=\emptyset,
A_t\subseteq A_{t+2}$.
Also, two vertices $(A_1,\ldots ,A_k)$ and $(B_1,\ldots ,B_k)$ of
$H(m,n,k)$ are adjacent if for any $1\leq i, j+1\leq k$, $A_i
\cap B_i=\emptyset, A_j \subseteq B_{j+1}$, and $B_j \subseteq
A_{j+1}$.
}
\end{defin}

 Note that $H(m,1,1)$ is the complete graph $K_m$ and $H(m,n,1)$
is the Kneser graph $KG(m,n)$. It is easy to verify that if $m >
2n$, then the odd girth of $H(m,n,k)$ is greater than or equal to
$2k+1$.

The following theorem shows that the helical graphs are bound of
high odd girth graphs.

\begin{alphthm}{\rm \cite{HH}}\label{HOMB} Let $G$ be a non-empty graph with
odd girth at least $2k+1$. Then, we have
$\hom[G^{(2k-1)},KG(m,n)]\not =\emptyset$ if and only if
$\hom[G,H(m,n,k)] \not = \emptyset.$
\end{alphthm}

Chromatic number of helical graphs has been characterized as
follows.

\begin{alphthm}{\rm \cite{HH}}\label{chrom}
Let $m, n,$ and $k$ be positive integers with $m\geq 2n$. The
chromatic number of the helical graph $H(m,n,k)$ is equal to
$m-2n+2$.
\end{alphthm}

A graph $H$ is said to be a subdivision of a graph $G$ if $H$ is
obtained from  $G$ by subdividing some of the edges. The graph
$S_t(G)$ is said to be the $t$-subdivision of a graph $G$ if
$S_t(G)$ is obtained from $G$ by replacing each edge by a path
with exactly $t-1$ inner vertices. Note that $S_1(G)$ is
isomorphic to $G$.

Hereafter, for a given graph $G$, we will use the following
notation for convenience.
$$G^{{2t+1 \over 2s+1}} \isdef (S_{2s+1}(G))^{2t+1}.$$

For instance, if $n\geq 3$ is a positive integer, then
$K_n^{6r+1\over 2r+1}\simeq K_{rn^2-rn+n}$. It was proved in
\cite{HH}, if $G$ is a graph with odd girth at least $2k+1$, then
a homomorphism from graph $G$ to $(2k+1)$-cycle exists if and
only if the chromatic number of $G^{2k+1\over 3}$ is less than or
equal to 3.

\begin{alphthm}\label{ODDC}{\rm \cite{HH}}
Let $G$ be a graph with odd girth at least $2k+1$. Then,
$\chi(G^{2k+1\over 3}) \leq 3$ if and only if
$\hom[G,C_{2k+1}]\not =\emptyset$.
\end{alphthm}

In what follows we are concerned with fractional powers. The paper
is organized as follows. In second section, we study some basic
properties of fractional power. In this regard, we show that for
any rational number $1\leq {2r+1\over 2s+1}< og(G)$,
$G^{{2r+1\over 2s+1}}\longrightarrow H$ if and only if
$G\longrightarrow H^{-{2s+1\over 2r+1}}.$  Also, for two rational
numbers ${2r+1\over 2s+1} < {2p+1\over 2q+1}$ and a non-bipartite
graph $G$, we show that $G^{2r+1\over 2s+1} < G^{2p+1\over
2q+1}$. In third section, we investigate some basic properties of
power thickness. In fourth section, we introduce an equivalent
definition for circular chromatic number of graphs in terms of
fractional powers. We also present a sufficient condition for
equality of chromatic number and circular chromatic number in
terms of power thickness. Finally, in Section five, we make some
concluding remarks about open problems and natural directions of
generalization.
\section{Fractional Powers}
In this section we investigate the basic properties of graph
powers. The following simple lemma can easily be proved by
constructing graph homomorphisms and its proofs is omitted for
the sake of brevity.

\begin{lem}\label{simple}
Let $G$ be a graph.
\begin{enumerate}
\item[a{\rm )}] If $s$ is a non-negative integer, then $G^{2s+1\over 2s+1}
\longleftrightarrow G$.

\item[b{\rm )}] If $s$ is a non-negative integer where $2s+1 < og(G)$, then
$(G^{2s+1})^{1\over 2s+1} \longrightarrow  G$.
\end{enumerate}
\end{lem}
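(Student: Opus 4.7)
The plan is to exhibit the required homomorphisms explicitly.

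For part (a), the forward direction $G\longrightarrow G^{{2s+1\over 2s+1}}$ is obtained from the inclusion $V(G)\hookrightarrow V(S_{2s+1}(G))$: every edge $uv\in E(G)$ is replaced in $S_{2s+1}(G)$ by a path of length exactly $2s+1$, so $u$ and $v$ become adjacent in $(S_{2s+1}(G))^{2s+1}$. For the reverse direction, after fixing an orientation of every edge of $G$ and labelling the replacement path of $uv$ as $u=p_0,p_1,\ldots,p_{2s+1}=v$, I define $\psi:V(S_{2s+1}(G))\to V(G)$ by $\psi(p_i)=u$ if $i$ is even and $\psi(p_i)=v$ if $i$ is odd; in particular $\psi$ is the identity on $V(G)$. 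A direct check shows that $\psi$ is a homomorphism from $S_{2s+1}(G)$ to $G$, and the heart of the argument will be to upgrade this to a homomorphism $(S_{2s+1}(G))^{2s+1}\longrightarrow G$.

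For part (b), the hypothesis $2s+1<og(G)$ forces $G^{2s+1}$ to be loopless: a loop at a vertex $u$ would correspond to a closed walk of length $2s+1$ through $u$ in $G$, contradicting the odd-girth bound. Hence $(G^{2s+1})^{1\over 2s+1}=S_{2s+1}(G^{2s+1})$ is a well-defined simple graph. For every edge $uv\in E(G^{2s+1})$ I pick, by definition of $G^{2s+1}$, an underlying walk $u=w^{uv}_0,w^{uv}_1,\ldots,w^{uv}_{2s+1}=v$ in $G$, and then define $\phi:S_{2s+1}(G^{2s+1})\longrightarrow G$ to be the identity on $V(G)$ and to send the $i$-th internal vertex of the subdivision path of $uv$ to $w^{uv}_i$. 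Consistency at original vertices is automatic (they always map to themselves), and along each subdivision path $\phi$ tracks an actual walk of $G$, so consecutive vertices map to adjacent vertices and $\phi$ is a homomorphism.

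The main obstacle is the verification in part (a) that $\psi$ respects the edges of $(S_{2s+1}(G))^{2s+1}$. Given adjacent $x,y$ there, i.e.\ a walk $W$ of length exactly $2s+1$ in $S_{2s+1}(G)$, I plan to decompose $W$ at its visits to $V(G)$. Since every internal subdivision vertex has degree $2$ in $S_{2s+1}(G)$, each sub-walk between consecutive visits to $V(G)$ lies inside a single replacement path, and is either a full traversal of that path (odd length, at least $2s+1$) or a return to its starting original (even length, at least $2$). Because the total length is exactly $2s+1$, at most one full traversal can appear in $W$, and if the endpoints $x,y$ are on distinct replacement paths they must share an original vertex at which $W$ turns. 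A short case analysis (both endpoints in $V(G)$; exactly one in $V(G)$; both on subdivision paths meeting at a common original) then shows, using the parity rule defining $\psi$, that $\{\psi(x),\psi(y)\}$ is always the pair of endpoints of some edge of $G$, so $\psi(x)\psi(y)\in E(G)$.
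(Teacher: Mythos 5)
Your proof is correct: the folding map $\psi$ for part (a), together with the degree-two confinement, parity, and at-most-one-full-traversal analysis of a length-$(2s+1)$ walk in $S_{2s+1}(G)$, and the walk-tracing map $\phi$ for part (b), are exactly the explicit homomorphism constructions the lemma calls for. The paper omits the proof entirely (saying only that the lemma ``can easily be proved by constructing graph homomorphisms''), so your argument simply supplies the intended details, and I find no gap in it.
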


The next lemma will be useful throughout the paper.

\begin{lem}\label{DUAL}
Let $G$ and $H$ be two graphs where $2s+1 < og(H)$. Then,
$G^{{1\over 2s+1}}\longrightarrow H$ if and only if
$G\longrightarrow H^{2s+1}.$
\end{lem}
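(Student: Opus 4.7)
The plan is to unfold the definition and then construct the two required homomorphisms by hand, using walks in $H$ to parametrize the subdivision paths in $G^{1/(2s+1)}$ and vice versa.

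First, I would note that by the notational convention $G^{1/(2s+1)} = (S_{2s+1}(G))^{1} = S_{2s+1}(G)$, so $G^{1/(2s+1)}$ is just the $(2s+1)$-subdivision of $G$: every edge $uv \in E(G)$ is replaced by a path $u = x_0^{uv}, x_1^{uv}, \ldots, x_{2s+1}^{uv} = v$. Also, by definition, an edge $ab \in E(H^{2s+1})$ corresponds to a walk $a = y_0, y_1, \ldots, y_{2s+1} = b$ of length $2s+1$ in $H$. These two observations make the correspondence almost transparent.

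For the forward direction, assume $\varphi : G \longrightarrow H^{2s+1}$. For each edge $uv$ of $G$, $\varphi(u)\varphi(v)$ is an edge of $H^{2s+1}$, so there exists a walk $W_{uv} = (\varphi(u) = y_0^{uv}, y_1^{uv}, \ldots, y_{2s+1}^{uv} = \varphi(v))$ of length $2s+1$ in $H$. Define $f : V(S_{2s+1}(G)) \longrightarrow V(H)$ by sending $x_i^{uv}$ to $y_i^{uv}$ (the original vertices of $G$ are sent consistently via $\varphi$, since $x_0^{uv} = u$ and $x_{2s+1}^{uv} = v$ are mapped to $\varphi(u)$ and $\varphi(v)$). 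Adjacent vertices of $S_{2s+1}(G)$ lie on the same subdivision path and their images form an edge of $W_{uv}$ in $H$, so $f$ is a homomorphism.

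For the converse, assume $f : S_{2s+1}(G) \longrightarrow H$. Set $\varphi = f\big|_{V(G)}$. For any edge $uv \in E(G)$, applying $f$ to the subdivision path $u = x_0^{uv}, x_1^{uv}, \ldots, x_{2s+1}^{uv} = v$ yields a walk of length $2s+1$ in $H$ from $\varphi(u)$ to $\varphi(v)$. It remains to check $\varphi(u) \neq \varphi(v)$: if instead $\varphi(u) = \varphi(v)$, this walk would be a closed walk of odd length $2s+1$ in $H$, forcing an odd cycle of length at most $2s+1$ and contradicting $2s+1 < og(H)$. Hence $\varphi(u)\varphi(v) \in E(H^{2s+1})$, and $\varphi$ is the desired homomorphism.

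The only nontrivial point is the use of the hypothesis $2s+1 < og(H)$, which is needed exclusively in the backward direction to rule out that the image walk collapses into a closed odd walk, i.e., to guarantee that $H^{2s+1}$ is loopless on the relevant vertices. Everything else is bookkeeping on subdivision paths and walks, so I do not expect any genuine obstacle.
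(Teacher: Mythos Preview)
Your argument is correct, but it proceeds quite differently from the paper's proof. The paper derives both implications from the functorial lemmas already in hand: from $G^{1/(2s+1)}\longrightarrow H$ it takes $(2s{+}1)$st powers (Lemma~A) and precomposes with $G\longrightarrow (G^{1/(2s+1)})^{2s+1}$ (Lemma~\ref{simple}(a)); from $G\longrightarrow H^{2s+1}$ it first subdivides and then invokes $(H^{2s+1})^{1/(2s+1)}\longrightarrow H$ (Lemma~\ref{simple}(b)), which is precisely where the odd-girth hypothesis enters. You instead unpack the definitions and build both maps by hand via the correspondence between subdivision paths in $G^{1/(2s+1)}$ and walks of length $2s+1$ in $H$; this is self-contained and does not rely on Lemmas~A or~\ref{simple}. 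A small curiosity is that the two proofs invoke $2s+1<og(H)$ in \emph{opposite} directions: the paper needs it to apply Lemma~\ref{simple}(b), whereas you use it to rule out $\varphi(u)=\varphi(v)$ when restricting $f$ to $V(G)$. (Since the paper explicitly allows $H^{2s+1}$ to carry loops, even that check is not strictly required---the closed odd walk would witness a loop at $\varphi(u)$, and the edge $uv$ could legitimately map to it---so the hypothesis is really there to keep $H^{2s+1}$ simple.) Your approach makes the mechanics more transparent; the paper's is shorter once the toolkit is in place and fits the pattern used repeatedly later, e.g.\ in Lemma~\ref{KLEM}.
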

\begin{proof}{
Let $G^{{1\over 2s+1}}\longrightarrow H$; then $(G^{{1\over
2s+1}})^{2s+1}\longrightarrow H^{2s+1}$. In view of Lemma
\ref{simple}(a), we have $G\longrightarrow (G^{{1\over
2s+1}})^{2s+1}\longrightarrow H^{2s+1}$. Conversely, assume that
$G\longrightarrow H^{2s+1}$. Hence, $G^{1\over
2s+1}\longrightarrow (H^{2s+1})^{1\over 2s+1}$. On the other
hand, Lemma \ref{simple}(b) shows that $(H^{2s+1})^{1\over
2s+1}\longrightarrow H$, as desired.}
\end{proof}
For given graphs $G$ and $H$ with $v\in V(G)$, set
$$N_i(v)\isdef \{u|{\rm there\ is\ a\ walk\ of\ length}\ i\ {\rm joining}\
u\ {\rm and}\ v\}.$$ Also, for a graph homomorphism $f:
G\longrightarrow H$, define
$$f(N_i(v))\isdef {\displaystyle \bigcup_{u \in N_i(v)}f(u)}.$$

Also, for two subsets $A$ and $B$ of the vertex set of a graph
$G$, we write $A \bowtie B$ if every vertex of $A$ is joined to
every vertex of $B$. Also, for any non-negative integer $s$,
define the graph $G^{-{1\over 2s+1}}$ as follows
$$V(G^{-{1\over 2s+1}})\isdef \{(A_1,\ldots,A_{s+1})|\ A_i\subseteq V(G), |A_1|=1,
\emptyset\not =A_i\subseteq N_{i-1}(A_1) \,\ , i\leq s+1\}.$$

Two vertices $(A_1,\ldots,A_{s+1})$ and $(B_1,\ldots,B_{s+1})$ are
adjacent in $G^{-{1\over 2s+1}}$ if for any $1\leq i\leq s$ and
$1\leq j\leq s+1$, $A_i\subseteq B_{i+1}$, $B_i\subseteq A_{i+1}$,
and $A_{j}\bowtie B_{j}$. Also, for any graph $G$ and ${2s+1\over
2r+1}\leq 1$ define the graph $G^{-{2s+1\over 2r+1}}$ as follows
$$G^{-{2s+1\over 2r+1}}\isdef (G^{-{1\over 2r+1}})^{2s+1}.$$

It is easy to verify that if $r$ is a non-negative integer, then
the odd girth of $G^{-{1\over 2r+1}}$ is greater than or equal to
$2r+3$.
The following theorem is a generalization of Theorem \ref{HOMB}
and Lemma 3(ii) of \cite{TA}.

\begin{thm}\label{GENTHM}
Let $G$ and $H$ be two graphs and $1\leq {2r+1\over 2s+1}< og(G)$.
We have $G^{{2r+1\over 2s+1}}\longrightarrow H$ if and only if
$G\longrightarrow H^{-{2s+1\over 2r+1}}.$
\end{thm}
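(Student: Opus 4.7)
The plan is to first reduce via Lemma \ref{DUAL} to the clean base case in which $2s+1 = 1$, and then prove that base case by explicitly constructing the homomorphism in each direction.

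\emph{Reduction.} By the observation following the definition of $G^{-1/(2r+1)}$, we have $og(H^{-1/(2r+1)}) \geq 2r+3 > 2s+1$ (since $s \leq r$). Applying Lemma \ref{DUAL} with target graph $H^{-1/(2r+1)}$ in place of $H$ therefore yields
\[
G \longrightarrow H^{-(2s+1)/(2r+1)} \ \iff \ G^{1/(2s+1)} \longrightarrow H^{-1/(2r+1)}.
\]
Writing $G' = G^{1/(2s+1)} = S_{2s+1}(G)$, the left-hand side of the theorem becomes $(G')^{2r+1} \longrightarrow H$, and the hypothesis $(2r+1)/(2s+1) < og(G)$ translates to $2r+1 < og(G') = (2s+1)\,og(G)$. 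Thus it suffices to prove the \emph{base case}: for any graph $G'$ with $og(G') > 2r+1$, $(G')^{2r+1} \longrightarrow H$ iff $G' \longrightarrow H^{-1/(2r+1)}$.

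\emph{Base case, $(\Leftarrow)$.} Let $\Phi : G' \longrightarrow H^{-1/(2r+1)}$ with $\Phi(v) = (A_1^v,\ldots,A_{r+1}^v)$, and let $\phi(v)$ denote the unique element of the singleton $A_1^v$. Given any edge $xy$ in $(G')^{2r+1}$, fix a witnessing walk $x = v_0, v_1, \ldots, v_{2r+1} = y$ in $G'$. Applying the defining inclusion $A_k \subseteq B_{k+1}$ to the adjacent pair $(\Phi(v_{k-1}), \Phi(v_k))$ for $k = 1, \ldots, r$ produces the telescoping chain $\{\phi(x)\} = A_1^{v_0} \subseteq A_2^{v_1} \subseteq \cdots \subseteq A_{r+1}^{v_r}$, and symmetrically $\{\phi(y)\} \subseteq A_{r+1}^{v_{r+1}}$. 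The middle edge $v_r v_{r+1}$ supplies the $\bowtie$-condition $A_{r+1}^{v_r} \bowtie A_{r+1}^{v_{r+1}}$, whence $\phi(x)\phi(y) \in E(H)$ as required.

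\emph{Base case, $(\Rightarrow)$.} Given $\phi : (G')^{2r+1} \longrightarrow H$, note that $E(G') \subseteq E((G')^{2r+1})$ (traverse any edge back and forth to build a walk of length $2r+1$), so $\phi$ restricts to a homomorphism $G' \longrightarrow H$. Define $\Phi(v) = (A_1^v,\ldots,A_{r+1}^v)$ with $A_i^v = \{\phi(u) : u \in N_{i-1}(v) \text{ in } G'\}$; the containment $A_i^v \subseteq N_{i-1}(\phi(v))$ in $H$ follows from the restricted homomorphism. For $vv' \in E(G')$, the inclusions $A_i^v \subseteq A_{i+1}^{v'}$ (and symmetrically) are immediate by prepending $v'v$ to walks. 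For the $\bowtie$-condition at index $j$, given $\phi(u) \in A_j^v$ and $\phi(u') \in A_j^{v'}$, the concatenated walk $u \to \cdots \to v \to v' \to \cdots \to u'$ has length $2j-1$; extending it by going back and forth on $vv'$ an additional $r-j+1$ times yields a walk of length exactly $2r+1$, so $uu' \in E((G')^{2r+1})$ and hence $\phi(u)\phi(u') \in E(H)$.

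\emph{Main obstacle.} The essential difficulty is the careful bookkeeping of the several inclusion and $\bowtie$-conditions that define adjacency in $H^{-1/(2r+1)}$; once the reduction to $2s+1 = 1$ is made and the definitions are unwound, both directions unfold through a "telescoping chain plus middle-edge bowtie" argument. The odd-girth hypothesis is used implicitly to keep $(G')^{2r+1}$ loopless, so that it can admit a homomorphism to the simple graph $H$.
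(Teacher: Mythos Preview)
Your proof is correct and follows essentially the same approach as the paper: both establish the base case $G^{2r+1}\longrightarrow H \Leftrightarrow G\longrightarrow H^{-1/(2r+1)}$ by the map $v\mapsto(\phi(v),\phi(N_1(v)),\ldots,\phi(N_r(v)))$ in one direction and the projection onto the first coordinate in the other, and both reduce the general statement to this base case via Lemma~\ref{DUAL}. Your telescoping argument in the $(\Leftarrow)$ direction is spelled out a bit more explicitly than the paper's, and you invoke Lemma~\ref{DUAL} as a clean biconditional for the reduction where the paper unpacks one direction via Lemma~\ref{simple}(a); the only omission is the trivial handling of isolated vertices, which the paper dispatches in one line.
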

\begin{proof}{
First, we show that
\begin{equation}\label{homhom}
G^{2r+1}\longrightarrow H\ {\rm if\ and\ only\ if\ }
G\longrightarrow H^{-{1\over 2r+1}}.
\end{equation}
Assume that $g \in \hom[G^{2r+1},H]$. Now, we present a graph
homomorphism, say $f$, from $G$ to $H^{-{1\over 2r+1}}$. If $v$
is an isolated vertex of $G$, then consider an arbitrary vertex,
say $f(v)$, of $H$ as image of $f$. For any non-isolated vertex
$v\in V(G)$, define
$$f(v)\isdef (g(v), g(N_1(v)), g(N_2(v)), \ldots ,g(N_{r}(v))).$$

Since $g$ is a graph homomorphism from $G^{2r+1}$ to $H$, one can
verify that for any vertex $v\in V(G)$, $f(v) \in H^{-{1\over
2r+1}}$. Also, for any $0\leq i, j+1 \leq r$, we have $g(N_i(v))
\bowtie g(N_i(u)),\ g(N_j(v)) \subseteq g(N_{j+1}(u))$, and
$g(N_j(u)) \subseteq g(N_{j+1}(v))$ provided that  $u$ is adjacent
to $v$. Hence, $f$ is a graph homomorphism from $G$ to
$H^{-{1\over 2r+1}}$.

Next, let $\hom[G,H^{-{1\over 2r+1}}] \not = \emptyset$ and $f: G
\longrightarrow H^{-{1\over 2r+1}}$. Assume $v\in V(G)$ and
$f(v)=(A_1,A_2,\ldots ,A_{r+1})$. Define, $g(v)\isdef A_1$. We
show that $g\in \hom[G^{2r+1},H]$. Assume further that $u,v \in
V(G)$ such that there is a walk of length $2t+1$ ($t\leq r$)
between $u$ and $v$ in $G$, i.e., $uv\in E(G^{2r+1})$. Consider
adjacent vertices $u'$ and $v'$ such that $u'\in N_t(u)$ and
$v'\in N_t(v)$. Also, let $f(v)=(A_1,A_2,\ldots ,A_{r+1})$,
$f(u)=(B_1,B_2,\ldots ,B_{r+1})$, $f(v')=(A'_1,A'_2,\ldots
,A'_{r+1})$, and $f(u')=(B'_1,B'_2,\ldots ,B'_{r+1})$. In view of
the definition of $H^{-{1\over 2r+1}}$, we obtain $A_1 \subseteq
A'_{t+1}$ and $B_1 \subseteq B'_{t+1}$. On the other hand,
$A'_{t+1}\bowtie B'_{t+1}$, which yields $g(v)$ is adjacent to
$g(u)$. Thus, $\hom[G^{2r+1},H]\not =\emptyset$.

Now, assume that $G^{{2r+1\over 2s+1}}\longrightarrow H$. In view
of (\ref{homhom}), one has $G^{{1\over 2s+1}}\longrightarrow
H^{-{1\over 2r+1}}$; consequently, $G\longrightarrow (G^{{1\over
2s+1}})^{2s+1}\longrightarrow (H^{-{1\over 2r+1}})^{2s+1}$.
Conversely, suppose $G\longrightarrow H^{-{2s+1\over 2r+1}}$.
Considering Lemma \ref{DUAL}, we have $G^{1\over 2s+1}
\longrightarrow H^{-{1\over 2r+1}}$. Now, in view of
(\ref{homhom}), one can conclude that $G^{2r+1\over 2s+1}
\longrightarrow H$. }
\end{proof}

Note that in Theorem \ref{GENTHM} we assume that $1\leq {2r+1\over
2s+1}$, since $2s+1$ should be less than the odd girth of
$H^{-{1\over 2r+1}}$. In fact, we don't know the exact value of
$og(H^{-{1\over 2r+1}})$. Though, we specify the odd girth of
$H(m,1,k)$ in Lemma \ref{Oddg}. This can be used to generalize
Theorem \ref{GENTHM}. Also, it should be noted that the above
theorem, for the case $s=0$, was obtained by C. Tardif (personal
communication).

\begin{cor}\label{GENHEL}
Let $G$  be a non-bipartite graph. If $1 \leq {2r+1 \over 2s+1}
 < og(G)$, then $G^{{2r+1 \over 2s+1}} \longrightarrow
K_m$ if and only if $G \longrightarrow H(m,1,r+1)^{2s+1}$.
\end{cor}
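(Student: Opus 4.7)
The plan is to derive the corollary by stringing together Theorem \ref{HOMB} applied to the subdivision $S_{2s+1}(G)$ with Lemma \ref{DUAL}. The notational convention gives $G^{2r+1 \over 2s+1} = (S_{2s+1}(G))^{2r+1}$, and of course $K_m = KG(m,1)$, so the left-hand side of the corollary is precisely the left-hand side of Theorem \ref{HOMB} with graph $S_{2s+1}(G)$ and parameters $(m,1,r+1)$.

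The first key step is to verify the odd-girth hypothesis of Theorem \ref{HOMB}. Because $2s+1$ is odd, the subdivision multiplies every odd cycle length by $2s+1$, so $og(S_{2s+1}(G)) = (2s+1)\,og(G)$. The assumption $(2r+1)/(2s+1) < og(G)$ gives $og(S_{2s+1}(G)) > 2r+1$, and since this is an odd integer it must be at least $2r+3 = 2(r+1)+1$, which is exactly what Theorem \ref{HOMB} demands. That theorem then converts the left-hand side of the corollary into $S_{2s+1}(G) \longrightarrow H(m,1,r+1)$.

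The second key step is to recognise this as $G^{1\over 2s+1} \longrightarrow H(m,1,r+1)$ and invoke Lemma \ref{DUAL} to move the fractional power to the target, producing $G \longrightarrow H(m,1,r+1)^{2s+1}$, which is the right-hand side of the corollary. Lemma \ref{DUAL} requires $2s+1 < og(H(m,1,r+1))$; the excerpt already records that $og(H(m,n,k)) \geq 2k+1$ when $m > 2n$, so $og(H(m,1,r+1)) \geq 2r+3$, and the standing hypothesis $1 \leq (2r+1)/(2s+1)$ forces $r \geq s$, giving $2r+3 > 2s+1$ as needed.

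The argument is thus a mechanical chain of previously established equivalences, and the only point requiring genuine care is the odd-girth bookkeeping in the two applications: confirming that $S_{2s+1}(G)$ has odd girth at least $2(r+1)+1$ so that Theorem \ref{HOMB} applies, and that $H(m,1,r+1)$ has odd girth exceeding $2s+1$ so that Lemma \ref{DUAL} applies. The small-$m$ edge cases ($m\leq 2$) are not a real obstruction, since the non-bipartiteness of $G$ rules out a homomorphism to $K_m$ while the helical graph $H(m,1,r+1)$ (respectively its power) correspondingly admits no homomorphism from $G$, so both sides of the biconditional fail together.
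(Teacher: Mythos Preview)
Your argument is correct. You chain Theorem~\ref{HOMB} (applied to $S_{2s+1}(G)$ with $k=r+1$, $n=1$) together with Lemma~\ref{DUAL}, and your odd-girth bookkeeping in both applications is accurate: $og(S_{2s+1}(G))=(2s+1)\,og(G)>2r+1$ forces $og(S_{2s+1}(G))\geq 2r+3$, and $r\geq s$ combined with $og(H(m,1,r+1))\geq 2r+3$ gives the hypothesis of Lemma~\ref{DUAL}.

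The paper, however, places this statement as an immediate corollary of Theorem~\ref{GENTHM} rather than of Theorem~\ref{HOMB}. The intended route is: Theorem~\ref{GENTHM} with $H=K_m$ gives $G^{2r+1\over 2s+1}\longrightarrow K_m$ iff $G\longrightarrow (K_m^{-{1\over 2r+1}})^{2s+1}$, and then one uses the homomorphic equivalence $K_m^{-{1\over 2r+1}}=KG(m,1)^{-{1\over 2r+1}}\longleftrightarrow H(m,1,r+1)$ (noted just after Lemma~\ref{INV}) together with Lemma~\ref{DDD} to replace the target by $H(m,1,r+1)^{2s+1}$. Your route bypasses Theorem~\ref{GENTHM} and the construction $G^{-{1\over 2r+1}}$ entirely, relying only on the earlier Theorem~\ref{HOMB}; in effect you are re-running the special case of the proof of Theorem~\ref{GENTHM} where the general dual $H^{-{1\over 2r+1}}$ is already known to be the helical graph. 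The paper's route exhibits the corollary as an instance of the general duality, while yours is logically lighter and self-contained from the cited earlier results. Both are valid.
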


\begin{lem}\label{INV}
Let $G$ be a non-bipartite graph.  For any non-negative integer
$r$ we have
$$(G^{-{1\over 2r+1}})^{2r+1} \longleftrightarrow G.$$
\end{lem}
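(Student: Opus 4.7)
The plan is to establish the homomorphic equivalence by proving each direction separately, with each direction amounting to a single application of Theorem~\ref{GENTHM}.

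For the direction $(G^{-{1\over 2r+1}})^{2r+1}\longrightarrow G$, I would apply Theorem~\ref{GENTHM} with $G^{-{1\over 2r+1}}$ playing the role of the source graph, $G$ playing the role of $H$, and with parameter $s=0$. The required hypothesis $1\leq 2r+1< og(G^{-{1\over 2r+1}})$ is met thanks to the remark recorded just above the statement of Theorem~\ref{GENTHM} that the odd girth of $G^{-{1\over 2r+1}}$ is at least $2r+3$. The equivalence supplied by Theorem~\ref{GENTHM} then reduces the existence of the desired homomorphism to the existence of a homomorphism $G^{-{1\over 2r+1}}\longrightarrow G^{-{1\over 2r+1}}$, which is trivial via the identity. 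Note that this direction does not rely on $G$ being non-bipartite.

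For the reverse direction $G\longrightarrow (G^{-{1\over 2r+1}})^{2r+1}$, I would apply Theorem~\ref{GENTHM} with $H=G$ and with the parameter $s$ set equal to $r$, so that the fractional exponent $(2r+1)/(2s+1)$ equals $1$. The hypothesis $1\leq 1< og(G)$ is precisely where non-bipartiteness is essential: for non-bipartite $G$ we have $og(G)\geq 3$, so $1<og(G)$. The equivalence in Theorem~\ref{GENTHM} now reads: $G^{(2r+1)/(2r+1)}\longrightarrow G$ if and only if $G\longrightarrow G^{-(2r+1)/(2r+1)}$. The left-hand side is guaranteed by Lemma~\ref{simple}(a), which asserts $G^{(2r+1)/(2r+1)}\longleftrightarrow G$; consequently the right-hand side also holds. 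Finally, unfolding the definition $G^{-(2r+1)/(2r+1)}=(G^{-{1\over 2r+1}})^{2r+1}$ yields the desired homomorphism.

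The whole argument is essentially bookkeeping. The only real points of care are the verifications that the odd-girth hypothesis of Theorem~\ref{GENTHM} is satisfied in each case, via the bound $og(G^{-{1\over 2r+1}})\geq 2r+3$ for the first direction and via the non-bipartiteness of $G$ for the second, so I do not anticipate any substantial technical obstacle.
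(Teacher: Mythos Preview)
Your proof is correct and follows essentially the same approach as the paper: both directions are obtained by feeding the trivial homomorphisms $G^{-{1\over 2r+1}}\longrightarrow G^{-{1\over 2r+1}}$ and $G^{{2r+1\over 2r+1}}\longrightarrow G$ into Theorem~\ref{GENTHM}. Your second direction is in fact slightly more streamlined than the paper's, which applies Theorem~\ref{GENTHM} with source $G^{1\over 2r+1}$ (and $s=0$) to obtain $G^{1\over 2r+1}\longrightarrow G^{-{1\over 2r+1}}$, then takes $(2r+1)$st powers and prepends $G\longrightarrow (G^{1\over 2r+1})^{2r+1}$; you instead invoke the theorem once with $s=r$ and read off the conclusion directly.
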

\begin{proof}{
First, note that $G^{-{1\over 2r+1}} \longrightarrow G^{-{1\over
2r+1}}$. Hence, in view of Theorem \ref{GENTHM}, we have
$(G^{-{1\over 2r+1}})^{2r+1} \longrightarrow G$. Next,
$G^{2r+1\over 2r+1}\longrightarrow G$. Considering Theorem
\ref{GENTHM}, we have $G^{1\over 2r+1}\longrightarrow G^{-1\over
2r+1}$. Thus, $G\longrightarrow (G^{1\over
2r+1})^{2r+1}\longrightarrow (G^{-1\over 2r+1})^{2r+1}$, as
required.}
\end{proof}

However, in general, $(G^{2r+1})^{-{1\over 2r+1}}$ and $G$ are
quite different. For example, $(C_5^{3})^{-{1\over
3}}=K_5^{-{1\over 3}}$ is not homomorphically equivalent to
$C_5$. In fact, $\chi(K_5^{-{1\over 3}})=\chi(H(5,1,2))=5$, while
$\chi(C_5)=3$. Also, it should be noted that for given positive
integers $k$, $m$, and $n$ where $m >2n$, the helical graph
$H(m,n,k)$ and the graph $KG(m,n)^{-{1\over 2k-1}}$ are
homomorphically equivalent. Although, if $k\geq 2$ and $n\geq 2$,
then the number of vertices of $H(m,n,k)$ is less than that of
$KG(m,n)^{-{1\over 2k-1}}$.

We introduce some notation used for the remainder of the paper.
Let $G$ be a graph which does not contain isolated vertices. Set
the vertex set of $G^{1 \over 2s+1}$ as follows. By abuse of
notation, for any edge $uv \in E(G)$, define $(uv)_0\isdef u$ and
$(vu)_0\isdef v$. Note that a vertex may have several
representations. Moreover, $(2s+1)$st subdivision of the edge $uv$
is a path of length $2s+1$, say $P_{uv}$,  set the vertices and
the edges of this path, respectively, as follows

$$V(P_{uv})\isdef \{(uv)_0,(uv)_1,\ldots ,(uv)_s,(vu)_0, (vu)_1,\ldots ,(vu)_s \}$$
and
$$E(P_{uv})\isdef \{(uv)_i(vu)_{s-i},\ (vu)_{s-j+1}(uv)_{j}|\ 0\leq i \leq s,\ 1\leq j \leq s\}.$$

Also, note that the graph $G^{2r+1 \over 2s+1}$ is $(2r+1)$st
power of $G^{1 \over 2s+1}$. Hence, we follow the aforementioned
notation for the vertex set of $G^{2r+1 \over 2s+1}$.

For a given non-bipartite graph $G$, we are going to prove a
density theorem for family of its fractional powers. First, we
prove an auxiliary lemma.

\begin{lem}\label{KLEM}
Let $G$  be a non-bipartite graph.

\begin{enumerate}
\item [a{\rm )}] If ${(2r+1)(2p+1) \over
(2s+1)}< og(G)$, then $G^{{(2r+1)(2p+1) \over (2s+1)}}
\longleftrightarrow (G^{{2r+1 \over 2s+1}})^{{2p+1}}.$

\item [b{\rm )}] If ${2r+1\over 2s+1}< og(G)$ and ${(2r+1)(2p+1)\over (2s+1)(2q+1)} < og(G)$, then $(G^{{2r+1\over
2s+1}})^{2p+1\over 2q+1}\longrightarrow G^{{(2r+1)(2p+1)\over
(2s+1)(2q+1)}}.$
\end{enumerate}
\end{lem}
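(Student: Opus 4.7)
The plan is to reduce both parts to the structural identity $(H^a)^b = H^{ab}$ for odd $a,b$ on a common vertex set. For part (a), set $H = S_{2s+1}(G)$, $a = 2r+1$, $b = 2p+1$, so that $G^{(2r+1)(2p+1)/(2s+1)} = H^{ab}$ and $(G^{(2r+1)/(2s+1)})^{2p+1} = (H^a)^b$. Any walk of length $ab$ in $H$ splits into $b$ consecutive blocks of length $a$, exhibiting an edge of $H^{ab}$ as an edge of $(H^a)^b$; conversely, concatenating one walk of length $a$ per $H^a$-step yields a walk of length $ab$ in $H$. Thus the two edge sets on $V(H)$ coincide. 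The hypothesis ${(2r+1)(2p+1)\over 2s+1} < og(G)$, equivalent to $ab < og(H) = (2s+1)\cdot og(G)$, rules out closed walks of length $ab$ in $H$, so both graphs are loopless and the identity on $V(H)$ serves as a homomorphism in both directions.

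For part (b), set $K = S_{2s+1}(G)$ and $L = S_{(2s+1)(2q+1)}(G)$, and note $L = S_{2q+1}(K)$, since subdividing each edge of $K$ by $2q+1$ converts each original $G$-edge into a path of length $(2s+1)(2q+1)$. Hence the left-hand side equals $(S_{2q+1}(K^{2r+1}))^{2p+1}$ and the target equals $L^{(2r+1)(2p+1)}$. The central step is to build a homomorphism $\phi\colon S_{2q+1}(K^{2r+1}) \longrightarrow L^{2r+1}$. Let $\phi$ be the identity on the vertices of $K$ (viewed inside $L$); for each edge $uv$ of $K^{2r+1}$, fix once and for all a witnessing walk $u = y_0, y_1, \ldots, y_{2r+1} = v$ of length $2r+1$ in $K$, embed it in $L$ (each $K$-step becoming a $(2q+1)$-path) to obtain a walk $u = w_0, w_1, \ldots, w_{(2r+1)(2q+1)} = v$ in $L$, and send the $2q$ interior subdivision vertices of the $uv$-path in $S_{2q+1}(K^{2r+1})$ to the anchors $w_{2r+1}, w_{2(2r+1)}, \ldots, w_{2q(2r+1)}$, in order. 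Well-definedness is automatic because distinct edges of $K^{2r+1}$ have disjoint interior subdivision vertices, and consecutive images along each subdivided edge sit at walk-distance $2r+1$ in $L$, hence are adjacent in $L^{2r+1}$.

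To conclude, Lemma \ref{M2} applied to $\phi$ (with exponent $2p+1$) yields $(S_{2q+1}(K^{2r+1}))^{2p+1} \longrightarrow (L^{2r+1})^{2p+1}$, after which part (a) applied to the graph $L$ with trivial denominator, under the hypothesis $(2r+1)(2p+1) < og(L) = (2s+1)(2q+1)\cdot og(G)$ (our second assumption), identifies $(L^{2r+1})^{2p+1} \longleftrightarrow L^{(2r+1)(2p+1)}$. Composing these produces the required homomorphism. I expect the main obstacle to be the bookkeeping inherent in the definition of $\phi$: an edge of $K^{2r+1}$ typically has many witnessing walks in $K$, an arbitrary choice among them must be fixed without creating conflicts, and one has to verify carefully that the images of consecutive vertices along each subdivided edge really lie at walk-distance exactly $2r+1$ in $L$.
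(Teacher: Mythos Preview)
Your argument is correct. Part~(a) is essentially the same as the paper's ``simple argument''. For part~(b), however, you take a genuinely different route from the paper.

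The paper's proof of (b) never builds an explicit map. It starts from the easy observation $G^{(2r+1)/(2s+1)} \longrightarrow G^{(2r+1)(2q+1)/((2s+1)(2q+1))}$, applies part~(a) to rewrite the target as $(G^{(2r+1)/((2s+1)(2q+1))})^{2q+1}$, and then invokes the duality of Lemma~\ref{DUAL} to pull the exponent $2q+1$ across as a $(2q+1)$-subdivision on the source: $(G^{(2r+1)/(2s+1)})^{1/(2q+1)} \longrightarrow G^{(2r+1)/((2s+1)(2q+1))}$. Raising both sides to the $(2p+1)$st power (Lemma~\ref{DDD}) and applying part~(a) once more finishes the chain. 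Your explicit homomorphism $\phi\colon S_{2q+1}(K^{2r+1}) \to (S_{2q+1}(K))^{2r+1}$ is exactly what one obtains by unwinding that single use of Lemma~\ref{DUAL}; indeed, by Lemma~\ref{DUAL} the existence of $\phi$ is equivalent to $K^{2r+1} \to (S_{2q+1}(K))^{(2r+1)(2q+1)}$, which in turn follows from $K \to (S_{2q+1}(K))^{2q+1}$ (Lemma~\ref{simple}(a)) and Lemma~\ref{DDD}. So the two proofs are morally the same, but the paper's abstraction via Lemma~\ref{DUAL} sidesteps precisely the bookkeeping you flag as the main obstacle: no witnessing walk ever needs to be chosen, and no well-definedness check is required.
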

\begin{proof}{
Part (a) follows by a simple argument. To prove part (b), note
that
$$
\begin{array}{lll}
 & G^{2r+1\over 2s+1} \longrightarrow G^{(2r+1)(2q+1)\over
(2s+1)(2q+1)}&  \\
\Rightarrow  & G^{2r+1\over 2s+1}   \longrightarrow
(G^{(2r+1)\over(2s+1)(2q+1)})^{2q+1}& (\rm{by\ Lemma\ \ref{KLEM}(a) )}\\

\Rightarrow & (G^{2r+1\over 2s+1})^{1\over 2q+1} \longrightarrow
G^{{(2r+1)\over (2s+1)(2q+1)}} &  (\rm{by\ Lemma\ \ref{DUAL})}\\
\Rightarrow & ((G^{2r+1\over 2s+1})^{1\over 2q+1})^{2p+1}
\longrightarrow (G^{{(2r+1)\over (2s+1)(2q+1)}})^{2p+1} &
(\rm{by\ Lemma\ \ref{DDD})}\\
\Rightarrow & (G^{2r+1\over 2s+1})^{2p+1\over
2q+1}\longrightarrow G^{{(2r+1)(2p+1)\over (2s+1)(2q+1)}} & (\rm{
by\ Lemma\ \ref{KLEM}(a))}\\
\end{array}
$$
}
\end{proof}

One important property of the family of circular complete graph is
that $K_{r\over s} < K_{p\over q}$ if and only if ${r\over s} <
{p\over q}$. Fortunately, for a given non-bipartite graph $G$, we
have a similar property for the family of fractional powers of
$G$.

\begin{thm}\label{TICK}
Let $G$  be a non-bipartite graph. If $ {2r+1 \over 2s+1} < {2p+1
\over 2q+1} < og(G)$, then $$G^{{2r+1 \over 2s+1}} < G^{{2p+1
\over 2q+1}}.$$
\end{thm}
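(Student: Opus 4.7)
My plan is to verify the two parts of $G^{{2r+1\over 2s+1}}<G^{{2p+1\over 2q+1}}$ separately: the existence of a homomorphism in the forward direction and its nonexistence in the reverse direction. The forward part will follow from a common-denominator trick; the reverse part will require a strictly monotone invariant, which is the delicate piece.

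For the forward direction, set $N=(2s+1)(2q+1)$, $A=(2r+1)(2q+1)$, and $B=(2p+1)(2s+1)$, all odd; the hypothesis ${2r+1\over 2s+1}<{2p+1\over 2q+1}$ is equivalent to $A<B$. Applying Lemma~\ref{simple}(a) to $S_{2s+1}(G)$ with exponent $2q+1$ gives $(S_N(G))^{2q+1}\longleftrightarrow S_{2s+1}(G)$, and raising both sides to the $(2r+1)$-th power via Lemma~\ref{DDD} yields $(S_N(G))^{A}\longleftrightarrow G^{{2r+1\over 2s+1}}$. A symmetric computation (switching the roles of $s,q$ with $r,p$) gives $(S_N(G))^{B}\longleftrightarrow G^{{2p+1\over 2q+1}}$. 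It therefore suffices to exhibit a homomorphism $(S_N(G))^A\longrightarrow(S_N(G))^B$. For any edge $uv$ of $(S_N(G))^A$, there is a walk of length $A$ from $u$ to $v$ in $S_N(G)$; since $v$ has a neighbor (being the penultimate vertex of that walk) and $B-A$ is even, this walk extends to one of length $B$ by $(B-A)/2$ zigzags on an incident edge at $v$, so $uv$ is an edge of $(S_N(G))^B$. The identity map on $V(S_N(G))$ is thus the required homomorphism.

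For the reverse direction, suppose for contradiction that $G^{{2p+1\over 2q+1}}\longrightarrow G^{{2r+1\over 2s+1}}$ also holds. Combined with the forward direction, this yields $(S_N(G))^A\longleftrightarrow(S_N(G))^B$. I plan to derive a contradiction from the circular chromatic number $\chi_c$, which is invariant under homomorphic equivalence. Because $G$ is non-bipartite it contains an odd cycle $C_{og(G)}$, which lifts in $S_N(G)$ to a cycle of length $N\,og(G)$; this gives the subgraph inclusions $C_{N\,og(G)}^{A}\subseteq(S_N(G))^A$ and $C_{N\,og(G)}^{B}\subseteq(S_N(G))^B$, and the known closed-form value of $\chi_c$ for cycle powers yields $\chi_c(C_{N\,og(G)}^{A})<\chi_c(C_{N\,og(G)}^{B})$ whenever $A<B$ are both odd with $\max(A,B)/N<og(G)$. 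To convert this lower-bound gap into a strict inequality of $\chi_c$ on the ambient powers, one must match it with an upper bound on $\chi_c((S_N(G))^A)$, which I would obtain by extending a circular coloring of $C_{N\,og(G)}^{A}$ to all of $S_N(G)$ using the tree-like structure of $S_N(G)$ off the odd cycle.

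The main obstacle is the reverse direction. The forward part is essentially bookkeeping once the common-denominator setup is in place. For the reverse direction, odd girth alone is too coarse, being a step function of the exponent that may collapse at $A$ and $B$ within the same odd ceiling bucket. The circular chromatic number is the right finer invariant, but the technical crux lies in transferring its strict monotonicity from the embedded cycle power to the ambient power $(S_N(G))^{\bullet}$, which requires lifting a circular coloring of the odd-girth cycle to the whole subdivided graph in a way that produces a matching upper bound.
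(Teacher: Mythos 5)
Your forward direction is correct and is essentially the paper's own argument: pass to the common denominator $N=(2s+1)(2q+1)$, identify $G^{{2r+1\over 2s+1}}$ and $G^{{2p+1\over 2q+1}}$ with $(S_N(G))^A$ and $(S_N(G))^B$, and extend a walk of length $A$ to one of length $B$ by zigzags since $B-A$ is even. The problem is the reverse direction, where your strategy has a genuine gap. You want to use $\chi_c$ as a strictly monotone invariant and to control $\chi_c((S_N(G))^A)$ from above by extending a circular coloring of the embedded cycle power $C_{N\,og(G)}^A$ "using the tree-like structure of $S_N(G)$ off the odd cycle." But $S_N(G)$ minus a shortest odd cycle is not tree-like: it contains all the other subdivided edges of $G$, and the power operation creates a dense structure among them, so $\chi_c((S_N(G))^A)$ is governed by all of $G$, not by its shortest odd cycle. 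A concrete failure: for $G=K_5$ with $N=A=3$ we have $(S_3(K_5))^3=K_5^{3/3}\longleftrightarrow K_5$, so $\chi_c=5$, while the embedded cycle power is $C_9^3\simeq K_{9\over 3}=K_3$ with $\chi_c=3$; no extension argument can close that gap. More fundamentally, even granting the forward strict inequality $H<H'$, this does not force $\chi_c(H)<\chi_c(H')$ (e.g., the Gr\"otzsch graph $M$ satisfies $M<K_4$ yet $\chi_c(M)=\chi_c(K_4)=4$), so there is no reason to expect $\chi_c$ to separate $(S_N(G))^A$ from $(S_N(G))^B$ at all.

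The paper avoids any numerical invariant. It first proves the special case $G<G^{{2r+1\over 2s+1}}$ for $1<{2r+1\over 2s+1}<og(G)$ by a structural core argument: if $G$ is a core and $G^{{2r+1\over 2s+1}}\longrightarrow G$, then since $G$ sits inside its fractional power as an induced subgraph the composite is an automorphism on that copy, and the subdivision vertices $(uv)_1,\dots,(uv)_s$ are successively forced (by adjacency to all neighbours of $u$ and the core property) to map where $u$ does; a triangle or an induced path of length three in $G$ then produces two adjacent vertices $(uv)_s$, $(uw)_s$ (adjacent because $2r+1\geq 2s+3$) with the same image, a contradiction, and the general case follows by passing to the core of $G$. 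The general strict inequality is then reduced to this special case by a self-referential power trick: if $G^{{2p+1\over 2q+1}}\longrightarrow G^{{2r+1\over 2s+1}}$, raising both sides to the exponent ${(2s+1)(2p+1)\over(2r+1)(2q+1)}>1$ and applying Lemma~\ref{KLEM}(b) gives $(G^{{2p+1\over 2q+1}})^{\alpha}\longrightarrow G^{{2p+1\over 2q+1}}$ with $\alpha>1$, contradicting the special case applied to the graph $G^{{2p+1\over 2q+1}}$. If you want to complete your proof, you will need to replace the $\chi_c$ transfer step with an argument of this structural kind.
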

\begin{proof}{
First, we show that if $1 < {2r+1 \over 2s+1}
 < og(G)$, then $G < G^{{2r+1 \over 2s+1}}.$
 We know that $G \longrightarrow G^{{2r+1 \over 2s+1}}.$ Hence, it is
sufficient to show that there is no homomorphism from $G^{{2r+1
\over 2s+1}}$ to $G$. First, we prove that if $G$ is a core, then
the statement is true. On the contrary, suppose that there is a
homomorphism from $G^{{2r+1 \over 2s+1}}$ to $G$. Since, $G$ is
core and induced subgraph of $G^{{2r+1 \over 2s+1}}$, this
homomorphism provides an isomorphism between two copies of $G$.
For any edge of $G$, say $e=uv$, The vertex $(uv)_1$ (resp.
$(vu)_1$) of $G^{{2r+1 \over 2s+1}}$  is adjacent to all
neighborhood of the vertex $u$ (resp. $v$). $G$ is a core;
therefore, the image of $(uv)_1$ (resp. $(vu)_1$) should be the
same as $u$ (resp. $v$). By induction, one can show that image of
$(uv)_{k}$ (resp. $(vu)_{k}$) should be the same as $u$ (resp.
$v$) whenever $1 \leq k \leq s$. Now, note that since $G$ is a
non-bipartite graph; hence, it contains a triangle or an induced
path of length three. Assume that $G$ contains a triangle with
vertex set $\{u, v, w\}$. Consider two vertices $(uv)_{s}$ and
$(uw)_{s}$. It was shown that images of $(uv)_{s}$ and $(uw)_{s}$
should be $u$. Also, $1 < {2r+1 \over 2s+1}$; consequently,
$(uv)_{s}$ and $(uw)_{s}$ are adjacent which is a contradiction.
Similarly, if $G$ contains an induced path of length three, we
get a contradiction.

Now, suppose that $G$ is an arbitrary non-bipartite graph. It is
well-know that $G$ contains a core, say $H$, as induced subgraph.
On the contrary, suppose that $G^{{2r+1 \over 2s+1}}
\longrightarrow G.$ Then, we have $H^{{2r+1 \over 2s+1}}
\longrightarrow G^{{2r+1 \over 2s+1}} \longrightarrow G
\longrightarrow H,$ which is a contradiction. Consequently, if $1
< {2r+1 \over 2s+1} < og(G)$, then $G < G^{{2r+1 \over 2s+1}}.$

It is readily to check that
$$
G^{{2r+1 \over 2s+1}}\longleftrightarrow G^{{(2r+1)(2q+1) \over
(2s+1)(2q+1)}}\quad {\rm and} \quad G^{{2p+1 \over 2q+1}}
\longleftrightarrow G^{{(2p+1)(2s+1) \over (2q+1)(2s+1)}}.
$$
On the other hand, we have ${2r+1 \over 2s+1} < {2p+1 \over
2q+1}$; hence, $G^{{(2r+1)(2q+1) \over (2s+1)(2q+1)}}
\longrightarrow G^{{(2p+1)(2s+1) \over (2q+1)(2s+1)}}$. It
remains to show that the inequality is strict. On the contrary,
assume that $G^{{2p+1 \over 2q+1}} \longrightarrow G^{{2r+1 \over
2s+1}}.$ Then, in view of Lemma \ref{KLEM}(b) we have
$$(G^{{2p+1 \over 2q+1}})^{{(2s+1)(2p+1) \over (2r+1)(2q+1)}}
\longrightarrow (G^{{2r+1 \over 2s+1}})^{{(2s+1)(2p+1) \over
(2r+1)(2q+1)}}\longrightarrow G^{{2p+1 \over 2q+1}}.$$
Note that ${{(2s+1)(2p+1) \over (2r+1)(2q+1)}} >1$ which is a
contradiction, as desired.}
\end{proof}
\section{Power Thickness}
Considering Theorem \ref{ODDC}, it is worth studying the following
definition which has been introduced in \cite{HH}.

\begin{defin}{ Assume that $G$ is a non-bipartite
graph. Also, let $i \geq -\chi(G)+3$ be an integer. {\it $i$th
power thickness} of $G$ is defined as follows.
$$\theta_i(G) \isdef \sup\{{2r+1 \over 2s+1}| \chi(G^{{2r+1 \over 2s+1}})\leq \chi(G)+i,
{2r+1 \over 2s+1}< og(G) \}.$$ For simplicity, when $i=0$, the
$0$th power thickness of $G$ is called power thickness of $G$ and
it is denoted by $\theta(G)$.}
\end{defin}

The importance of power thickness is that it allows us to obtain
necessary condition for the existence of graph homomorphisms.

\begin{lem}\label{NOHOM}
Let $G$ and $H$  be two non-bipartite graphs with
$\chi(G)=\chi(H)-j,\ j\geq 0$. If $G \longrightarrow H$ and $i+j
\geq -\chi(G)+3$, then
$$\theta_{i+j}(G) \geq \theta_i(H).$$
\end{lem}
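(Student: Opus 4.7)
The plan is to show that every ratio $(2r+1)/(2s+1)$ appearing in the set whose supremum defines $\theta_i(H)$ also appears in the set defining $\theta_{i+j}(G)$, from which the inequality follows immediately.

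First I would verify the odd-girth constraint. Since $G \longrightarrow H$, any closed odd walk in $G$ maps to a closed odd walk in $H$, so $og(H) \leq og(G)$. Thus if $(2r+1)/(2s+1) < og(H)$, then automatically $(2r+1)/(2s+1) < og(G)$, so the candidate ratio lies in the feasible range for both power thicknesses.

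The key technical step is to lift the homomorphism $G \longrightarrow H$ to a homomorphism of fractional powers, i.e., to show
$$G^{\frac{2r+1}{2s+1}} \longrightarrow H^{\frac{2r+1}{2s+1}}.$$
Given $f \in \hom[G,H]$, I would first construct a homomorphism $\tilde f\colon S_{2s+1}(G) \longrightarrow S_{2s+1}(H)$ by sending each original vertex $v$ to $f(v)$ and mapping the $2s+1$ internal vertices of the subdivided edge $P_{uv}$ along the corresponding subdivided edge $P_{f(u)f(v)}$ (using the coordinate notation $(uv)_i$ introduced in the excerpt). Once this is in place, applying Lemma~\ref{DDD} with exponent $k = 2r+1$ to $\tilde f$ yields the desired homomorphism between $(2r+1)$st powers, which is exactly $G^{(2r+1)/(2s+1)} \longrightarrow H^{(2r+1)/(2s+1)}$.

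Assume now that $(2r+1)/(2s+1)$ achieves $\chi(H^{(2r+1)/(2s+1)}) \leq \chi(H) + i$. Composing with the homomorphism produced above gives
$$\chi\bigl(G^{\frac{2r+1}{2s+1}}\bigr) \leq \chi\bigl(H^{\frac{2r+1}{2s+1}}\bigr) \leq \chi(H) + i = \chi(G) + (i+j),$$
so $(2r+1)/(2s+1)$ belongs to the set defining $\theta_{i+j}(G)$ (this is where the hypothesis $i+j \geq -\chi(G)+3$ is used, ensuring that $\theta_{i+j}(G)$ is a legitimate quantity). Taking the supremum over all such ratios yields $\theta_{i+j}(G) \geq \theta_i(H)$.

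I do not foresee a genuine obstacle: the only point requiring any care is the explicit construction of $\tilde f$ on the subdivision, and that is purely mechanical using the notation $(uv)_i$, $(vu)_i$ established before the lemma.
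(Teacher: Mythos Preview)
Your proposal is correct and follows exactly the paper's approach: pick a ratio in the set defining $\theta_i(H)$, use $og(G)\geq og(H)$ and the fact that $G\longrightarrow H$ implies $G^{\frac{2r+1}{2s+1}}\longrightarrow H^{\frac{2r+1}{2s+1}}$ to place it in the set defining $\theta_{i+j}(G)$, then take suprema. The paper simply asserts the lifting $G^{\frac{2r+1}{2s+1}}\longrightarrow H^{\frac{2r+1}{2s+1}}$ without justification, whereas you spell it out via $\tilde f$ and Lemma~\ref{DDD}; one small slip is that $P_{uv}$ has $2s$ (not $2s+1$) internal vertices, but this does not affect the argument.
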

\begin{proof}{
Consider a rational number ${2r+1 \over 2s+1}< og(H)$ for which
$\chi(H^{{2r+1 \over 2s+1}}) \leq \chi(H)+i$. We know that $og(G)
\geq og(H)$ since $G\longrightarrow H$. Hence, ${2r+1 \over
2s+1}< og(G)$ and $G^{{2r+1 \over 2s+1}} \longrightarrow H^{{2r+1
\over 2s+1}}$ which implies that $\chi(G^{{2r+1 \over 2s+1}}) \leq
\chi(H)+i=\chi(G)+i+j$.}
\end{proof}

In view of Theorem \ref{ODDC}, it is a hard task to compute the
power thickness of arbitrary graphs. Hereafter, we will introduce
some results in this regard. Finding graphs with high power
thickness arises naturally in the mind. In this direction, we
compute the power thickness of some helical graphs.

\begin{thm}\label{heli}
Let $k$, $l$, and $m$ be  positive integers where $m\geq 3$ and
${2l-1 \over 2k-1}\leq 1$. Then,
$$\theta(H(m,1,k)^{2l-1})={2k-1\over 2l-1}$$.
\end{thm}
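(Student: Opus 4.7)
The plan is to establish $\theta(G) = (2k-1)/(2l-1)$ with $G := H(m,1,k)^{2l-1}$ and $H := H(m,1,k)$, by proving matching upper and lower bounds. I would first check $\chi(G) = m$: the bound $\chi(G) \geq m$ follows from $H \to G$ (Lemma~\ref{simple}) together with $\chi(H) = m$ from Theorem~\ref{chrom}, while the reverse bound comes from padding each walk of length $2l-1$ by $k-l$ back-and-forth edge traversals to produce $G \to H^{2k-1}$, combined with $H^{2k-1} \to K_m$ supplied by Theorem~\ref{HOMB} applied to the identity on $H$.

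For the lower bound $\theta(G) \geq (2k-1)/(2l-1)$, I would apply Lemma~\ref{KLEM}(b) with base graph $H$ and exponents $(2l-1)/1$ and $(2k-1)/(2l-1)$, yielding $G^{(2k-1)/(2l-1)} = (H^{2l-1})^{(2k-1)/(2l-1)} \to H^{2k-1}$; composing with $H^{2k-1} \to K_m$ gives $\chi(G^{(2k-1)/(2l-1)}) \leq m = \chi(G)$. The side condition $(2k-1)/(2l-1) < og(G)$ follows from $og(G) \geq (2k+1)/(2l-1)$, obtained by lifting any odd cycle of $G = H^{2l-1}$ to a closed odd walk of length $c(2l-1)$ in $H$, which must contain an odd cycle.

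For the upper bound $\theta(G) \leq (2k-1)/(2l-1)$, I would assume $\chi(G^{(2r+1)/(2s+1)}) \leq m$ for some valid ratio and translate the hypothesis into a statement about $H$ alone. Corollary~\ref{GENHEL} recasts it as $H^{2l-1} \to H(m,1,r+1)^{2s+1}$. Applying Lemma~\ref{DUAL} (whose odd-girth hypothesis follows from $s \leq r$), Theorem~\ref{HOMB} with parameter $r+1$, and Lemma~\ref{KLEM} to reorganize iterated fractional powers, I would derive $H^{(2l-1)(2r+1)/(2s+1)} \to K_m$. The desired conclusion $(2r+1)/(2s+1) \leq (2k-1)/(2l-1)$ then follows from the base case $\theta(H) \leq 2k-1$, i.e., the $l=1$ instance of the theorem.

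The principal obstacle I anticipate is the base case $\theta(H(m,1,k)) \leq 2k-1$: ruling out a homomorphism $H(m,1,k) \to H(m,1,r+1)^{2s+1}$ for $(2r+1)/(2s+1) \in (2k-1, 2k+1)$. The odd-girth comparison alone only gives the coarser bound $(2r+1)/(2s+1) < 2k+1 = og(H)$, so additional input is needed; I expect either a topological (Borsuk--Ulam-type) obstruction mirroring the proof of $\chi(H(m,1,k)) = m$ behind Theorem~\ref{chrom}, or a combinatorial argument exploiting the nested-subset structure defining helical graphs, to close the gap. A secondary subtlety is that $(H^{2l-1})^{1/(2s+1)}$ and $H^{(2l-1)/(2s+1)}$ are in general distinct graphs, related only by the one-directional homomorphism of Lemma~\ref{KLEM}(b); the intermediate reductions in the upper-bound step must be arranged to respect this asymmetry.
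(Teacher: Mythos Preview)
Your lower bound and the verification $\chi(G)=m$ are fine and match the paper. The upper bound, however, has a genuine gap --- precisely the one you flag as the ``principal obstacle.'' You reduce everything to the base case $\theta(H(m,1,k))\le 2k-1$ and then speculate that closing it requires either Borsuk--Ulam machinery or a special combinatorial analysis of the nested-subset structure. It requires neither. The paper proves the upper bound for every $l$ simultaneously (including $l=1$) by an elementary self-referential argument using Theorem~\ref{TICK}, and the key move you are missing is to apply the duality with the \emph{fixed} exponent $(2k-1)/(2l-1)$ rather than with the variable exponent $(2r+1)/(2s+1)$. Concretely: write $G=H(m,1,k)^{2l-1}$ and suppose $\theta(G)=t>(2k-1)/(2l-1)$. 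Choose $(2r+1)/(2s+1)>1$ with $\frac{(2r+1)(2k-1)}{(2s+1)(2l-1)}<t$ and set $X:=G^{(2r+1)/(2s+1)}$. By Lemma~\ref{KLEM}(b) together with the definition of $t$ one gets $\chi\bigl(X^{(2k-1)/(2l-1)}\bigr)\le m$; now Theorem~\ref{GENTHM} (using $K_m^{-1/(2k-1)}\longleftrightarrow H(m,1,k)$, or equivalently Corollary~\ref{GENHEL} with exponent $(2k-1)/(2l-1)$) converts this into $X\longrightarrow H(m,1,k)^{2l-1}=G$. Since $(2r+1)/(2s+1)>1$, this contradicts Theorem~\ref{TICK}. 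No base case, no topology.

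As a secondary point, even granting the base case your reduction would stumble on the arrow direction you yourself warn about: from $(H^{2l-1})^{(2r+1)/(2s+1)}\to K_m$ you want to conclude $H^{(2l-1)(2r+1)/(2s+1)}\to K_m$, but Lemma~\ref{KLEM}(b) only supplies a homomorphism from the former \emph{into} the latter, which is the wrong way round for transferring the bound on the chromatic number.
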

\begin{proof}{
In view of Lemma \ref{KLEM}(b) and Theorem \ref{HOMB}, we have
$(H(m,1,k)^{2l-1})^{2k-1 \over 2l-1} \longrightarrow
H(m,1,k)^{2k-1} \longrightarrow K_m$; therefore,
$\theta(H(m,1,k)^{2l-1})\geq {2k-1\over 2l-1}$. Suppose, on the
contrary, that $\theta(H(m,1,k)^{2l-1})=t > {2k-1\over 2l-1}$.
Choose a rational number $1 < {2r+1\over 2s+1}$ such that $1 <
{(2r+1)(2k-1) \over (2s+1)(2l-1)} < t$. Set $G\isdef
(H(m,1,k)^{2l-1})^{2r+1\over 2s+1}$. In view of Lemma
\ref{KLEM}(b) and definition of power thickness, one has
$\chi(G^{2k-1\over 2l-1}) \leq m$. By Theorem \ref{GENTHM}, one
has $G \longrightarrow H(m,1,k)^{2l-1}$. Thus,
$(H(m,1,k)^{2l-1})^{2r+1 \over 2s+1} \longrightarrow
H(m,1,k)^{2l-1}$ which contradicts Theorem \ref{TICK}, as
claimed.}
\end{proof}

The next definition provides a sufficient condition for the graphs
with $\theta(G)=1$.

\begin{defin}{Let $G$ be a graph with chromatic number $k$. $G$ is
called a colorful graph if for any $k$-coloring $c$ of $G$, there
exists an induced subgraph $H$ of $G$ such that for any vertex
$v$ of $H$, all colors appear in closed neighborhood of $v$,
i.e., $c(N[v]) = \{1,2,\ldots ,k\}$.}
\end{defin}

\begin{thm}\label{colorful}
For any non-bipartite colorful graph $G$, we have $\theta(G)=1$.
\end{thm}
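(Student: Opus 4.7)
The plan is as follows. The inequality $\theta(G) \ge 1$ is immediate from Lemma~\ref{simple}(a), which gives $G^{1/1} \leftrightarrow G$ so that $\chi(G^{1/1}) = \chi(G)$; hence the ratio $1$ lies in the supremum defining $\theta(G)$.

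For the reverse inequality I argue by contradiction. Suppose $\theta(G) > 1$, so there exist nonnegative integers $r > s$ with $1 < \frac{2r+1}{2s+1} < og(G)$ and a proper $k$-coloring $c$ of $G^{(2r+1)/(2s+1)}$, where $k := \chi(G)$. Since any edge $uv \in E(G)$ lifts to a walk of length $2r+1$ in $S_{2s+1}(G)$ (traverse the path $P_{uv}$ of length $2s+1$ and then oscillate $r-s$ times), $u$ and $v$ are adjacent in $G^{(2r+1)/(2s+1)}$, so $c|_{V(G)}$ is a proper $k$-coloring of $G$. By Theorem~\ref{GENTHM} the coloring $c$ corresponds to a homomorphism $\phi\colon G \to K_k^{-(2s+1)/(2r+1)}$, and unwinding the construction from that proof gives the explicit formula $\phi(w) = (A_1(w), \ldots, A_{r+1}(w))$ with $A_1(w) = \{c(w)\}$ and $A_j(w) = c(N_{j-1}(w))$, where the neighborhoods are taken in $S_{2s+1}(G)$.

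Now apply the colorful hypothesis to $c|_{V(G)}$: there is an induced subgraph of $G$ in which every vertex $v$ satisfies $c(N_G[v]) = [k]$. The key step is to locate an edge $uv$ in this subgraph (which one expects for non-bipartite $G$ since $k \ge 3$) and then derive a contradiction from the $A_2$-coordinates of $\phi(u)$ and $\phi(v)$. In the integer-power case $s = 0$ the argument is transparent: $A_2(v) = c(N_G(v)) = [k] \setminus \{c(v)\}$ and symmetrically $A_2(u) = [k] \setminus \{c(u)\}$, so $A_2(u) \cap A_2(v) = [k] \setminus \{c(u), c(v)\}$ has cardinality $k - 2 \ge 1$, contradicting the disjointness $A_2(u) \cap A_2(v) = \emptyset$ required by adjacency of $\phi(u)$ and $\phi(v)$ in $K_k^{-1/(2r+1)} \leftrightarrow H(k,1,r+1)$.

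The main obstacle will be the general case $s \ge 1$, where $A_2(v)$ records colors on subdivision vertices of $S_{2s+1}(G)$ adjacent to $v$ rather than colors of $G$-neighbors of $v$, and adjacency in $H(k,1,r+1)^{2s+1}$ is defined via walks of length $2s+1$. My plan is to propagate the colorful equality through the length-$(2s+1)$ walk that connects $\phi(u)$ and $\phi(v)$ in $H(k,1,r+1)$, using the nested inclusions $A_j(\cdot) \subseteq A_{j+1}(\cdot)$ imposed at each step of the walk, until the $s = 0$ style disjointness clash reappears on a later coordinate. A secondary obstacle is confirming that the colorful induced subgraph always contains an edge when $\chi(G) \ge 3$; this should follow by analyzing the rigid monochromatic structure that the $A_2$-identity forces on second $G$-neighborhoods of colorful vertices, which cannot consistently propagate across a non-bipartite graph.
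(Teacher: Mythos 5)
Your proposal, as written, proves the theorem only for integer powers (the case $s=0$), and that case does not suffice. By Theorem~\ref{TICK} the set of ratios $\frac{2r+1}{2s+1}$ with $\chi(G^{\frac{2r+1}{2s+1}})\leq\chi(G)$ is an initial segment, so ruling out the ratios $\frac{2r+1}{1}\geq 3$ only shows $\theta(G)<3$; a graph could still have $1<\theta(G)<3$ (indeed $\theta(C_{5})=\frac{5}{3}$ by Lemma~\ref{chromc}(b), so genuinely fractional exponents are the only ones that can witness $\theta(G)>1$). Thus the case $s\geq 1$, which you defer to a ``plan'' of propagating inclusions through length-$(2s+1)$ walks in $H(k,1,r+1)$, is the entire content of the theorem, and that plan is not carried out; it is also unclear how the colorful hypothesis, which constrains only the colors of vertices of $G$, would control the intermediate vertices of those walks. (Your ``secondary obstacle'' is not one: every vertex of the colorful subgraph $H$ sees all $k\geq 3$ colors in its closed neighborhood, hence has at least $k-1\geq 2$ neighbors in $H$, so $H$ certainly contains an edge.)

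The paper's proof avoids Theorem~\ref{GENTHM} altogether and handles all $s$ uniformly by working directly with the $k$-coloring $c$ of $G^{\frac{2r+1}{2s+1}}$ restricted to the subdivision vertices. Since $r>s$, the vertex $(uv)_{1}$ of $S_{2s+1}(G)$ lies at distance $2s+3\leq 2r+1$ from every $G$-neighbor of $u$, hence is adjacent in $G^{\frac{2r+1}{2s+1}}$ to all of $N_{H}(u)$, whose colors exhaust $[k]\setminus\{c(u)\}$; this forces $c((uv)_{1})=c(u)$, and by induction $c((uv)_{j})=c(u)$ for all $j\leq s$. Since $k\geq 3$, the colorful subgraph contains either a triangle or a path $u,v,w,x$ with $c(u)=c(x)$, and then $(uv)_{s}$ and $(xw)_{s}$ are two equally colored vertices at odd distance $2s+3\leq 2r+1$, a contradiction. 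Your $A_{2}$-disjointness observation is a clean substitute for this in the case $s=0$, but to complete the proof you would need either to extend the paper's direct propagation argument to the subdivision vertices or to make your walk-propagation rigorous; as it stands the central case is missing.
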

\begin{proof}{
On the contrary, suppose that $\theta(G)>1.$ Choose a rational
number $1 < {2r+1 \over 2s+1} < \theta(G)$. By definition,
$\chi(G^{{2r+1 \over 2s+1}})=\chi(G)=k$. Consider a $k$-coloring
of the graph $G^{{2r+1 \over 2s+1}}$. Since, $G$ is a colorful
graph and an induced subgraph of $G^{{2r+1 \over 2s+1}}$, there
exists an induce subgraph of $G^{{2r+1 \over 2s+1}}$, denoted by
$H$, such that for any vertex $v$ of $H$, all colors appear in
closed neighborhood of $v$. For any edge of $H$, say $e=uv$, the
vertex $(uv)_1$ (resp. $(vu)_1$) of $G^{{2r+1 \over 2s+1}}$  is
adjacent to all neighborhood of the vertex $u$ (resp. $v$).
Therefore, the color of $(uv)_1$ (resp. $(vu)_1$) should be the
same as $u$ (resp. $v$). By induction, one can show that the color
of $(uv)_{k}$ (resp. $(vu)_{k}$) should be the same as $u$ (resp.
$v$) provided that $uv\in E(H)$. In view of coloring property of
$H$, it should contain a triangle or an induced path of length
three whose end vertices have the same color. Assume that $H$
contains an induced path with vertex set $\{u,v,w,x\}$ and edge
set $\{uv, vw, wx\}$ such that $u$ and $x$ have the same color.
Consider two vertices $(uv)_{s}$ and $(xw)_{s}$. It was shown
that colors of $(uv)_{s}$ and $(xw)_{s}$ should be the same as
$u$ and $x$, i.e, they have the same color. On the other hand, $1
< {2r+1 \over 2s+1}$; consequently, $(uv)_{s}$ and $(xw)_{s}$ are
adjacent which is a contradiction. Similarly, if $H$ contains a
triangle, we get a contradiction.}
\end{proof}

We know that any uniquely colorable graph is a colorful graph.
Hence, the power thickness of non-bipartite uniquely colorable
graphs is one.

\begin{cor}
Let $K_n$ be complete graph with $n\geq 3$ vertices. Then,
$\theta(K_n)=1$
\end{cor}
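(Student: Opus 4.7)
The plan is to deduce this corollary directly from Theorem \ref{colorful}, so the only work is to verify that $K_n$ with $n \geq 3$ meets its hypotheses: being non-bipartite and being colorful.

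Non-bipartiteness is immediate since $K_n$ contains a triangle for $n \geq 3$. For colorfulness, I would exploit the fact that $K_n$ is uniquely $n$-colorable: every proper $n$-coloring $c$ of $K_n$ must assign $n$ distinct colors to its $n$ vertices, simply because all vertices are pairwise adjacent. Consequently, for every vertex $v$ we have $c(N[v]) = c(V(K_n)) = \{1,2,\ldots,n\}$. Thus, taking the induced subgraph $H$ in the definition of ``colorful'' to be $K_n$ itself works for every $n$-coloring $c$, so $K_n$ is colorful.

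With these two properties checked, Theorem \ref{colorful} immediately gives $\theta(K_n) \leq 1$. The reverse inequality $\theta(K_n) \geq 1$ is built into the definition, since $\chi(K_n^{1/1}) = \chi(K_n)$ by Lemma \ref{simple}(a) and $1 < og(K_n) = 3$. Combining the two inequalities yields $\theta(K_n) = 1$.

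There is essentially no obstacle here: the corollary is a one-line application of Theorem \ref{colorful} once unique colorability of $K_n$ is observed. The only thing worth double-checking is that the definition of $\theta$ applies, i.e., that $-\chi(K_n) + 3 \leq 0$ so the index $i = 0$ is admissible; this holds precisely because $n \geq 3$, which is exactly the hypothesis of the corollary.
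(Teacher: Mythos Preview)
Your proof is correct and follows exactly the paper's approach: the corollary is placed immediately after Theorem~\ref{colorful} and the remark that uniquely colorable graphs are colorful, so the paper's implicit argument is precisely ``$K_n$ is uniquely colorable, hence colorful, hence $\theta(K_n)=1$ by Theorem~\ref{colorful}.'' Your separate verification that $\theta(K_n)\geq 1$ is unnecessary, since Theorem~\ref{colorful} already concludes the equality $\theta(G)=1$ rather than just an upper bound, but this redundancy does no harm.
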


A less ambitious objective is to find all graphs with power
thickness one. Also, we don't know whether any graph with power
thickness one is colorful.

Of particular interest is the conclusion that circular complete
graph  $K_{2n+1 \over n-t}$ is isomorphic to $C_{2n+1}^{2t+1}$.
This allows us to investigate some coloring properties of
circular complete graph powers.

\begin{lem}\label{chromc}
Given non-negative integers $n$ and $t$ where $n> t$. We have
\begin{enumerate}
\item[{\rm a)}] $C_{2n+1}^{2t+1}\simeq K_{2n+1 \over n-t}$

\item[{\rm b)}] $\theta(C_{2n+1})={2n+1\over 3}.$
\end{enumerate}
\end{lem}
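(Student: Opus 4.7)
For part (a), my plan is to realise both $C_{2n+1}^{2t+1}$ and $K_{(2n+1)/(n-t)}$ as circulant graphs on $\mathbb{Z}_{2n+1}$ and exhibit an explicit group automorphism carrying one connection set onto the other. A walk of length $2t+1$ on $C_{2n+1}$ achieves signed displacements in the $(2t+2)$-element set $\{-(2t+1), -(2t-1), \ldots, 2t+1\}$ of odd integers; two distinct elements of this set differ by a nonzero even integer of absolute value at most $4t+2 < 4n+2 = 2(2n+1)$, so they cannot be congruent modulo the odd number $2n+1$. Hence $C_{2n+1}^{2t+1}$ is the circulant on $\mathbb{Z}_{2n+1}$ with connection set $S_1 = \{\pm 1, \pm 3, \ldots, \pm(2t+1)\}$, while unravelling the defining condition $n-t \leq |i-j| \leq n+t+1$ presents $K_{(2n+1)/(n-t)}$ as the circulant with connection set $S_2 = \{n-t, n-t+1, \ldots, n+t+1\}$. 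Since $\gcd(n+1, 2n+1) = 1$, the map $v_i \mapsto v_{(n+1)i \bmod (2n+1)}$ is a bijection; using $2n+2 \equiv 1 \pmod{2n+1}$ one computes $(n+1)(2k+1) \equiv k + (n+1) \pmod{2n+1}$, whence $(n+1)\cdot S_1 = \{n+1+k : -t-1 \leq k \leq t\} = S_2$, giving the desired isomorphism.

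For part (b), I plan to prove the two matching inequalities $(2n+1)/3 \leq \theta(C_{2n+1})$ and $\theta(C_{2n+1}) \leq (2n+1)/3$. The lower bound is immediate from Theorem \ref{ODDC}: applying that theorem to $G = C_{2n+1}$ with $k = n$ (so $og(G) \geq 2k+1$) and using the identity homomorphism $C_{2n+1} \to C_{2n+1}$ yields $\chi(C_{2n+1}^{(2n+1)/3}) \leq 3 = \chi(C_{2n+1})$; together with $(2n+1)/3 < 2n+1 = og(C_{2n+1})$, this places $(2n+1)/3$ in the set whose supremum defines $\theta(C_{2n+1})$.

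For the upper bound, suppose $(2n+1)/3 < (2r+1)/(2s+1) < 2n+1$; I will show $\chi(C_{2n+1}^{(2r+1)/(2s+1)}) > 3$. Theorem \ref{TICK} applied to the non-bipartite graph $C_{2n+1}$ gives $C_{2n+1}^{(2n+1)/3} < C_{2n+1}^{(2r+1)/(2s+1)}$, so no homomorphism $C_{2n+1}^{(2r+1)/(2s+1)} \to C_{2n+1}^{(2n+1)/3}$ exists. I then establish the hom-equivalence $C_{2n+1}^{(2n+1)/3} \leftrightarrow K_3$: the forward map is the $3$-colouring above, and a reverse homomorphism is witnessed by the three vertices $v_0, v_{2n+1}, v_{4n+2}$ of $C_{6n+3} = S_3(C_{2n+1})$, which are pairwise joined by walks of length $2n+1$ in $C_{6n+3}$ and therefore span a triangle in $C_{6n+3}^{2n+1} = C_{2n+1}^{(2n+1)/3}$. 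Composing yields no homomorphism $C_{2n+1}^{(2r+1)/(2s+1)} \to K_3$, hence $\chi(C_{2n+1}^{(2r+1)/(2s+1)}) > 3$. The main technical point is the single arithmetic identity $(n+1)\cdot S_1 = S_2$ in part (a); once that is in hand, part (b) is a direct assembly of Theorems \ref{ODDC} and \ref{TICK} together with the explicit triangle on $v_0, v_{2n+1}, v_{4n+2}$.
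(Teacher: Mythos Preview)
Your argument is correct. For part~(a) the paper offers only the phrase ``follows by a simple discussion'', so your explicit circulant realisation via the multiplier $n+1$ is exactly the kind of computation the authors had in mind and is carried out cleanly.

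For part~(b) your route differs from the paper's. The paper observes that $S_{2s+1}(C_{2n+1})=C_{(2n+1)(2s+1)}$, so $C_{2n+1}^{(2r+1)/(2s+1)}\simeq C_{(2n+1)(2s+1)}^{2r+1}$, and then applies part~(a) together with $\chi(K_{p/q})=\lceil p/q\rceil$ to obtain the closed formula $\chi(C_{2n+1}^{(2r+1)/(2s+1)})=\bigl\lceil (2n+1)(2s+1)\big/(2ns+n+s-r)\bigr\rceil$; a short inequality manipulation then shows this ceiling is at most $3$ exactly when $(2r+1)/(2s+1)\le (2n+1)/3$, giving the value of $\theta(C_{2n+1})$ directly. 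Your argument instead invokes Theorem~\ref{ODDC} for the lower bound and the density Theorem~\ref{TICK} for the upper bound. Both are legitimate; the paper's computation is more elementary and yields the exact chromatic number of every fractional power of an odd cycle as a by-product, while your version illustrates how the structural theorems suffice without any arithmetic. One small simplification available to you: your separately constructed triangle on $v_0,v_{2n+1},v_{4n+2}$ is unnecessary, since part~(a) applied with $2n+1$ replaced by $6n+3$ and $t=n$ already gives $C_{2n+1}^{(2n+1)/3}=C_{6n+3}^{2n+1}\simeq K_{(6n+3)/(2n+1)}=K_3$ on the nose.
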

\begin{proof}{Part (a) follows by a simple discussion.
Note that $C_{2n+1}^{2r+1\over 2s+1}$ and$C_{(2n+1)(2s+1)}^{2r+1}$
are isomorphic. Also, in view of part (a),
$\chi(C_{2n+1}^{2r+1})=\lceil {2n+1\over n-r} \rceil$. Now, part
(b) follows by part (a).}
\end{proof}

Now, we are ready to specify the odd girth of $H(m,1,k)$.

\begin{lem}\label{Oddg}
Let $m \geq 3$ and $k$ be positive integers. The odd girth of  the
helical graph $H(m,1,k)$ is equal to $2k+2\lceil{2k-1\over m-2}
\rceil-1$.
\end{lem}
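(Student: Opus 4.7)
The plan is to characterize $og(H(m,1,k))$ as the smallest odd integer $2n+1$ for which $C_{2n+1}\longrightarrow H(m,1,k)$, and then reduce the question to the chromatic number of a power of an odd cycle via the tools already proved. Recall that a graph $G$ contains an odd cycle of length $\leq 2n+1$ if and only if $C_{2n+1}\longrightarrow G$ (the forward direction is immediate, the reverse uses the fact that from any odd cycle one can build a closed walk of any larger odd length by going back and forth along one of its edges). So computing $og(H(m,1,k))$ amounts to minimizing such an $n$.

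First I would invoke Corollary~\ref{GENHEL} with $G=C_{2n+1}$, $r=k-1$ and $s=0$: since $C_{2n+1}$ is non-bipartite and its odd girth is $2n+1$, the hypothesis $1\leq 2k-1<og(C_{2n+1})$ holds exactly when $n\geq k$. Under that restriction one obtains
$$
C_{2n+1}\longrightarrow H(m,1,k) \iff C_{2n+1}^{2k-1}\longrightarrow K_m \iff \chi(C_{2n+1}^{2k-1})\leq m.
$$
The range $n<k$ can be dismissed separately, because it is already noted in the paper that $og(H(m,1,k))\geq 2k+1$ whenever $m\geq 3$; hence no $C_{2n+1}$ with $n<k$ admits a homomorphism to $H(m,1,k)$.

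Next I would plug in Lemma~\ref{chromc}(a), which for $n\geq k$ gives $C_{2n+1}^{2k-1}\simeq K_{\frac{2n+1}{n-k+1}}$ and therefore $\chi(C_{2n+1}^{2k-1})=\lceil\frac{2n+1}{n-k+1}\rceil$. The algebraic identity
$$
\frac{2n+1}{n-k+1}=2+\frac{2k-1}{n-k+1}
$$
turns the inequality $\lceil\frac{2n+1}{n-k+1}\rceil\leq m$ into $n-k+1\geq \lceil\frac{2k-1}{m-2}\rceil$. The least admissible value of $n$ is thus
$$
n^{\ast}=k-1+\left\lceil\frac{2k-1}{m-2}\right\rceil,
$$
which automatically satisfies $n^{\ast}\geq k$ because $\lceil\frac{2k-1}{m-2}\rceil\geq 1$ for $m\geq 3$. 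Consequently
$$
og(H(m,1,k))=2n^{\ast}+1=2k+2\left\lceil\frac{2k-1}{m-2}\right\rceil-1,
$$
as required.

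There is no really hard step in this argument; the substance of the result is packed into Corollary~\ref{GENHEL} and Lemma~\ref{chromc}(a). The only subtlety worth verifying explicitly is that the side conditions for applying Corollary~\ref{GENHEL} (namely $2k-1<og(C_{2n+1})$) and for applying Lemma~\ref{chromc}(a) (namely $n\geq k$) are both compatible with the extremal value $n^{\ast}$; this is handled by combining the already-known lower bound $og(H(m,1,k))\geq 2k+1$ with the monotonicity of $\frac{2n+1}{n-k+1}$ in $n$.
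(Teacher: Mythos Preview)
Your proposal is correct and follows essentially the same route as the paper: both characterize $og(H(m,1,k))$ as the least $2n+1$ with $C_{2n+1}\longrightarrow H(m,1,k)$, convert this (for $n\geq k$) into the inequality $\lceil\frac{2n+1}{n-k+1}\rceil\leq m$ via Lemma~\ref{chromc}(a), and solve. The only cosmetic difference is that the paper establishes the equivalence $C_{2n+1}\longrightarrow H(m,1,k)\Leftrightarrow \chi(C_{2n+1}^{2k-1})\leq m$ by combining Lemma~\ref{DDD} with the fact $\chi(H(m,1,k)^{2k-1})=m$ in one direction and (implicitly) Theorem~\ref{HOMB} in the other, whereas you invoke Corollary~\ref{GENHEL} once for both directions; your final algebraic derivation of $n^\ast$ is also spelled out more explicitly than in the paper.
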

\begin{proof}{
Let $C_{2n+1} \longrightarrow H(m,1,k)$; then $C_{2n+1}^{2k-1}
\longrightarrow H(m,1,k)^{2k-1}$. On the other hand, we know that
$\chi(H(m,1,k)^{2k-1})=m$; consequently, in view of Lemma
\ref{chromc}(a) we have $\lceil{2n+1 \over n-k+1} \rceil \leq m$.
Also, if $\lceil{2n+1 \over n-k+1} \rceil \leq m$, then by using
Lemma \ref{chromc}(a) we have $C_{2n+1}^{2k-1} \longrightarrow
K_m$ which this implies that $C_{2n+1} \longrightarrow H(m,1,k)$.
Therefore, the odd girth of the helical graph $H(m,1,k)$ is the
smallest value of $2n+1$ for which $\lceil{2n+1 \over n-k+1}
\rceil \leq m$. It is easy to check that the odd girth of
$H(m,1,k)$ should be $2k+2\lceil{2k-1\over m-2} \rceil-1$.}
\end{proof}
\section{Circular Coloring}
The remainder of this paper is devoted to connection between
chromatic number of graph powers and circular coloring. In the
next theorem we introduce an equivalent definition for circular
chromatic number of graphs.

\begin{thm}\label{defchic}
Let G be a non-bipartite graph with chromatic number $\chi(G)$.
Then, $\chi(G)\neq\chi_c(G)$ if and only if there exists a
rational number ${{2r+1}\over
{2s+1}}>{\frac{\chi(G)}{3(\chi(G)-2)}}$ for which
$\chi(G^{{2r+1}\over {2s+1}})= 3$. Moreover, $\chi_c(G)=\inf
\{{2n+1\over n-t}| \chi(G^{2n+1\over 3(2t+1)})=3, n> t >0\}.$
Also, if ${{2r+1}\over {2s+1}} \leq
{\frac{\chi(G)}{3(\chi(G)-2)}}$, then $\chi(G^{{2r+1}\over
{2s+1}})=3$.
\end{thm}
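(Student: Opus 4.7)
The plan is to prove the middle formula first and deduce the stated iff and the final threshold assertion as algebraic consequences. The crux is the bi-implication
$$G\longrightarrow K_{(2n+1)/(n-t)}\iff \chi(G^{(2n+1)/(3(2t+1))})=3,$$
valid for integers $n>t\geq 1$ satisfying $(2t+1)\cdot og(G)\geq 2n+1$. I would establish it in three steps. Lemma \ref{chromc}(a) identifies $K_{(2n+1)/(n-t)}$ with $C_{2n+1}^{2t+1}$, reducing the left-hand side to $G\to C_{2n+1}^{2t+1}$. Since $2t+1<og(C_{2n+1})=2n+1$, Lemma \ref{DUAL} rewrites this as $G^{1/(2t+1)}\to C_{2n+1}$. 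Finally, Theorem \ref{ODDC}, applied to the non-bipartite graph $G^{1/(2t+1)}$ of odd girth $(2t+1)\cdot og(G)\geq 2n+1$, converts that homomorphism into $\chi((G^{1/(2t+1)})^{(2n+1)/3})\leq 3$. Because the subdivision operations compose as $S_3\circ S_{2t+1}=S_{3(2t+1)}$, this quantity equals $\chi(G^{(2n+1)/(3(2t+1))})$, and ``$\leq 3$'' is actually ``$=3$'' by non-bipartiteness.

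To derive the formula, I combine the bi-implication with $\chi_c(G)=\inf\{p/q:G\to K_{p/q}\}$. The ratios $(2n+1)/(n-t)=2+(2t+1)/(n-t)$, for $n>t\geq 1$, form a dense subset of $(2,\infty)$, and $\{p/q:G\to K_{p/q}\}$ is upward-closed, so the infimum over this dense subfamily coincides with $\chi_c(G)$; substituting the bi-implication yields the stated formula. The iff condition then falls out by algebra: $\chi(G)\neq\chi_c(G)$ means $\chi_c(G)<\chi(G)=k$, which by the formula is equivalent to the existence of a pair $(n,t)$ with $\chi(G^{(2n+1)/(3(2t+1))})=3$ and $(2n+1)/(n-t)<k$; cross-multiplying rearranges to $(2n+1)/(3(2t+1))>k/(3(k-2))$, which is the promised $(2r+1)/(2s+1)$.

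For the threshold claim, suppose $(2r+1)/(2s+1)\leq k/(3(k-2))$, equivalently $3(2r+1)/(2s+1)\leq k/(k-2)$. The non-empty interval $[3(2r+1)/(2s+1),\,k/(k-2)]$ contains some odd/odd rational $(2n+1)/(2t+1)$ by the density of odd/odd rationals (treating the boundary case separately: for $k$ odd, take $(2n+1,2t+1)=(k,k-2)$). Then $(2r+1)/(2s+1)\leq (2n+1)/(3(2t+1))\leq k/(3(k-2))\leq 1<og(G)$, so Theorem \ref{TICK} gives $G^{(2r+1)/(2s+1)}\longrightarrow G^{(2n+1)/(3(2t+1))}$; meanwhile $(2n+1)/(n-t)\geq k\geq\chi_c(G)$, so $G\to K_{(2n+1)/(n-t)}$ and the central bi-implication forces $\chi(G^{(2n+1)/(3(2t+1))})=3$. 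Combining, $\chi(G^{(2r+1)/(2s+1)})\leq 3$, and equality holds since the graph is non-bipartite.

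The main obstacle I anticipate is the odd-girth bookkeeping: the hypothesis $(2t+1)\cdot og(G)\geq 2n+1$ restricts which pairs admit the full bi-implication, and one must argue that this restriction is harmless for the infimum --- essentially because $\chi_c(G)$ can always be approached through pairs with $n$ close enough to $t$ (equivalently $(2t+1)$ large enough) to make the condition automatic. A secondary technical point is handling the boundary $(2r+1)/(2s+1)=k/(3(k-2))$ when the interval collapses to a single point; this requires checking that a representation $(2n+1)/(2t+1)=k/(k-2)$ with both odd exists, which succeeds precisely when the hypothesis itself is satisfiable (the problematic case $k$ even with boundary equality turns out to be vacuous, since then $k/(3(k-2))$ has no odd/odd representation).
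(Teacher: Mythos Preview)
Your proposal is correct and follows essentially the same route as the paper. The central bi-implication $G\to K_{(2n+1)/(n-t)}\iff \chi(G^{(2n+1)/(3(2t+1))})=3$ is exactly the paper's displayed chain (\ref{chichic}), derived via the same three ingredients (Lemma~\ref{chromc}(a), Lemma~\ref{DUAL}, Theorem~\ref{ODDC}); and the deductions of the iff, the infimum formula, and the threshold claim all proceed through Theorem~\ref{TICK} in the same way. The only cosmetic differences are that you order the argument formula-first (the paper establishes the iff first and then reads off the formula), and that you are more explicit about the odd-girth hypothesis needed to invoke Theorem~\ref{ODDC}---a point the paper leaves implicit in its choice of $n,t$.
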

\begin{proof}{First, assume that $\chi(G)\neq\chi_c(G)$ and $\chi_c(G) <
{2d\chi(G)-1 \over 2d} < \chi(G)$ where $d$ is sufficiently
large. By Lemma \ref{chromc}(a), $C_{2n+1}^{2t+1} \simeq K_{2dx-1
\over 2d}$ whenever $n=d\chi-1$ and $t=d(\chi-2)-1$. Therefore,
$G\longrightarrow K_{2n+1\over{n-t}}.$ On the other hand,
\begin{equation}\label{chichic}
\begin{array}{llll}
 G\longrightarrow K_{2n+1\over n-t} & \Longleftrightarrow &
G\longrightarrow
 K_{2n+1\over n}^{2t+1} & ({\rm by\ Lemma\
 \ref{chromc}}(a)) \\
 & \Longleftrightarrow & G^{1\over{2t+1}}\longrightarrow
K_{2n+1\over n} & ({\rm{by\ Lemma}\ \ref{DUAL})}\\
 & \Longleftrightarrow & G^{1\over{2t+1}}\longrightarrow
  C_{2n+1} & \\
 &\Longleftrightarrow & \chi(G^{2n+1\over{3(2t+1)}})= 3 & ({\rm{by\ Theorem\  \ref{ODDC}}})
\end{array}
\end{equation}
It is readily seen that
${2n+1\over{3(2t+1)}}>{\frac{\chi(G)}{3(\chi(G)-2)}}$ whenever
$n=d\chi-1$ and $t=d(\chi-2)-1$. Hence, it suffices to set
${{2r+1}\over {2s+1}}={2n+1\over{3(2t+1)}}$; consequently,
$\chi(G^{{2r+1}\over {2s+1}})= 3$. Conversely, let
$\chi(G^{{2r+1}\over {2s+1}})= 3$ for a rational number
${{2r+1}\over {2s+1}}>{\frac{\chi(G)}{3(\chi(G)-2)}}.$ Choose
positive integers $n$ and $t$ which satisfy
${\frac{\chi(G)}{3(\chi(G)-2)}} < {2n+1\over{3(2t+1)}} \leq
{{2r+1}\over {2s+1}}$. By Theorem \ref{TICK},
$G^{2n+1\over{3(2t+1)}}\longrightarrow G^{{2r+1}\over {2s+1}}$
and so $3\leq \chi(G^{2n+1\over{3(2t+1)}})\leq \chi(
G^{{2r+1}\over {2s+1}})$. In view of (\ref{chichic}) we have
$G\longrightarrow K_{2n+1\over {n-t}}$ provided that
$\chi(G^{2n+1\over{3(2t+1)}})=3$. Also, it is straightforward to
verify ${2n+1\over{n-t}}<\chi(G)$ whenever
${\frac{\chi(G)}{3(\chi(G)-2)}}<{2n+1\over{3(2t+1)}}$. Thus,
$\chi_c(G)<\chi(G).$

Also, the (\ref{chichic}) shows that $\chi_c(G)=\inf \{{2n+1\over
n-t}| \chi(G^{2n+1\over 3(2t+1)})=3\}.$ Finally, suppose that
${{2r+1}\over {2s+1}} \leq {\frac{\chi(G)}{3(\chi(G)-2)}}$. As
before, choose positive integers $n$ and $t$ which satisfy
${{2r+1}\over {2s+1}}\leq {2n+1\over{3(2t+1)}} \leq
{\frac{\chi(G)}{3(\chi(G)-2)}}$. One can see that
${2n+1\over{n-t}}\geq \chi(G)$, and therefore $G\longrightarrow
K_{2n+1\over n-t}$. Now, (\ref{chichic}) implies that
$\chi(G^{2n+1\over 3(2t+1)})=3$. By Theorem \ref{TICK}, we have
$3 \leq \chi(G^{2r+1\over 2s+1}) \leq \chi(G^{2n+1\over
3(2t+1)})=3$, as claimed.
}
\end{proof}

We show that the power thickness of circular complete graphs
$K_{p\over q}$ is greater than one provided that $q\nmid p$.
\begin{thm}\label{CIRCOM} For any rational number ${p\over
q}>2$ where $q\nmid p$ we have
$$\theta(K_{p\over q})> 1.$$
\end{thm}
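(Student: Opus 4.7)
My plan is to exploit that $\chi_c(K_{p/q}) = p/q < k := \chi(K_{p/q})$ (the strict inequality coming from $q \nmid p$), and to exhibit a rational $\alpha = (2r+1)/(2s+1) > 1$ with $\chi(K_{p/q}^{\alpha}) \leq k$ by routing homomorphically through a large circular complete graph whose chromatic number we can compute exactly.

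The key algebraic observation is that $p < kq$ yields $p(k-2) > k(p-2q)$, so $p(k-2)/(k(p-2q)) > 1$. I would fix a rational $\alpha = (2r+1)/(2s+1)$ with
$$1 < \alpha < \min\!\left\{\frac{p(k-2)}{k(p-2q)},\ og(K_{p/q})\right\};$$
such $\alpha$ exists since both upper bounds exceed $1$. Next I would find non-negative integers $n > t$, writing $n = t+m$, satisfying (i) $t \geq (m(p-2q)-q)/(2q)$ (equivalently $(2n+1)/(n-t) \geq p/q$) and (ii) $(2t+1)(2r+1) \leq \frac{k-2}{k}(2n+1)(2s+1)$. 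Taking $t = \lceil (m(p-2q)-q)/(2q)\rceil$ and letting $m \to \infty$, the ratio $(2n+1)/(2t+1) = (2t+2m+1)/(2t+1)$ tends to $p/(p-2q)$; since $\alpha < p(k-2)/(k(p-2q))$, the inequality (ii) — rewritten as $\alpha k/(k-2) \leq (2n+1)/(2t+1)$ — holds for all sufficiently large $m$.

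Now I would chain the homomorphisms. By (i) and Lemma \ref{chromc}(a), $K_{p/q} \to K_{(2n+1)/(n-t)} \simeq C_{2n+1}^{2t+1}$. A vertex map $G \to H$ extends naturally to the $(2s+1)$-subdivisions $S_{2s+1}(G) \to S_{2s+1}(H)$, and then Lemma \ref{DDD} yields $K_{p/q}^{\alpha} \to (C_{2n+1}^{2t+1})^{\alpha}$. Parts (b) and (a) of Lemma \ref{KLEM} give
$$(C_{2n+1}^{2t+1})^{(2r+1)/(2s+1)} \longrightarrow C_{2n+1}^{(2t+1)(2r+1)/(2s+1)} \longleftrightarrow C_{(2n+1)(2s+1)}^{(2t+1)(2r+1)},$$
and by Lemma \ref{chromc}(a) this last graph is isomorphic to $K_{M/((M-K)/2)}$, where $M = (2n+1)(2s+1)$ and $K = (2t+1)(2r+1)$, of chromatic number $\lceil 2M/(M-K)\rceil$. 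Condition (ii) rearranges precisely to $2M/(M-K) \leq k$, so $\chi(K_{p/q}^{\alpha}) \leq k$; combined with $1 < \alpha < og(K_{p/q})$, this gives $\theta(K_{p/q}) \geq \alpha > 1$.

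The principal obstacle is the joint calibration of $(n,t,r,s)$: one must enlarge $m$ far enough to force $(2n+1)/(2t+1)$ above $\alpha k/(k-2)$ while preserving $(2n+1)/(n-t) \geq p/q$, and one must verify that $(2n+1)(2s+1) > (2t+1)(2r+1)$ so that Lemma \ref{chromc}(a) indeed produces a circular complete graph at the end. The available window for $\alpha$ is non-empty precisely because $p/q < k$, i.e.\ because $q \nmid p$; under the complementary hypothesis $q \mid p$, $K_{p/q}$ is the complete graph $K_k$, which is uniquely colorable and has $\theta = 1$ by the corollary of Theorem \ref{colorful}, so the restriction $q \nmid p$ in the statement is sharp.
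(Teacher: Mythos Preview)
Your proof is correct and follows essentially the same route as the paper's: both arguments map $K_{p/q}$ into some $C_{2n+1}^{2t+1}\simeq K_{(2n+1)/(n-t)}$, apply Lemma~\ref{KLEM}(b) to push the fractional power inside, and then read off the chromatic number of the resulting odd-cycle power via Lemma~\ref{chromc}(a). The only cosmetic difference is the order of the limiting parameters: the paper first fixes the intermediate target $K_{(2dm-1)/2d}$ (via large $d$, with $n=dm-1$, $t=d(m-2)-1$) and then lets the exponent $(2s+1)/(2s-1)$ tend to $1$ from above, whereas you first fix the exponent $\alpha$ in the admissible window $\bigl(1,\,p(k-2)/(k(p-2q))\bigr)$ and then let $m\to\infty$ to locate suitable $n,t$.
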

\begin{proof}{
Set $m\isdef {\lceil\frac{p}{q}\rceil}$. Choose a positive integer
$d$ such that ${p\over q} < {\frac{2d{m}-1} {2d}}<m $. We know
that $K_{p \over q} \longrightarrow K_{\frac{2d{m}-1} {2d}}$;
hence, it is sufficient to show that there exists  a positive
integer $s$ such that  $(K_{\frac{2d{m}-1} {2d}})^{{2s+1}\over
{2s-1}}\longrightarrow K_m$.

Set $n \isdef {d m-1}$, $t\isdef d(m-2)-1$. In view of Lemma
\ref{chromc}(a) and Lemma \ref{KLEM}(b), we have
$$(K_{\frac{2d{m}-1} {2d}})^{{2s+1}\over
{2s-1}}\simeq (C_{2n+1}^{2t+1})^{\frac{2s+1}{2s-1}}\longrightarrow
(C_{2n+1})^{\frac{(2t+1)(2s+1)}{2s-1}}\simeq
(C_{(2n+1)(2s-1)})^{(2t+1)(2s+1)}.$$
On the other hand, Lemma \ref{chromc}(b) confirms that $$\chi(
(C_{(2n+1)(2s-1)})^{(2t+1)(2s+1)})=\lceil
\frac{(2n+1)(2s-1)}{(n-t)(2s+1)-2n-1}\rceil.$$ Therefore,
$$\chi((K_{\frac{2d{m}-1} {2d}})^{{2s+1}\over {2s-1}})\leq \lceil
\frac{(2dm-1)(2s-1)}{2d(2s+1)-2dm+1}\rceil.$$
It is easily to see that if $s$ is sufficiently large, then
$\chi((K_{\frac{2d{m}-1} {2d}})^{{2s+1}\over {2s-1}})= m.$ In
other words, $\theta(K_{p\over q})\geq {2s+1\over{2s-1}}>1.$}
\end{proof}

The aforementioned theorem provides a sufficient condition for
equality of chromatic number and circular chromatic number of
graphs. In fact, if we show that power thickness of a graph $G$ is
equal to one, then $\chi(G)=\chi_c(G)$.

In case $\chi(G)=3$, it is well-known that $\chi_c(G)=3$ if and
only if $G$ is a colorful graph.

\begin{thm}\label{circular}
Let $G$ be a graph with chromatic number $3$. Then, $\theta(G)=1$
if and only if $\chi_c(G)=3$.
\end{thm}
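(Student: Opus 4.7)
The plan is to specialize Theorem \ref{defchic} to the case $\chi(G)=3$, where the threshold $\chi(G)/(3(\chi(G)-2))$ collapses to $1$, and then read $\theta(G)$ off directly. Let $S$ denote the set of rationals $(2r+1)/(2s+1) < og(G)$ with $\chi(G^{(2r+1)/(2s+1)}) \le 3$, so that $\theta(G) = \sup S$. The strategy has two parts: first show $1 \in S$ (so $\theta(G) \ge 1$ holds unconditionally), and then show that $S$ contains a rational strictly greater than $1$ iff $\chi_c(G) \ne 3$.

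For the base point, Lemma \ref{simple}(a) gives $G^{1/1} \longleftrightarrow G$, hence $\chi(G^{1/1}) = 3 \le 3$, placing $1$ in $S$ and yielding $\theta(G) \ge 1$.

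For the main equivalence, I would combine two observations. The first is a monotonicity remark: for any rational $(2r+1)/(2s+1) > 1$ below $og(G)$, since $\chi(G)=3$ makes $G$ non-bipartite, Theorem \ref{TICK} gives $G \longleftrightarrow G^{1/1} < G^{(2r+1)/(2s+1)}$, whence $\chi(G^{(2r+1)/(2s+1)}) \ge \chi(G) = 3$. Consequently such a rational lies in $S$ precisely when $\chi(G^{(2r+1)/(2s+1)}) = 3$. The second ingredient is Theorem \ref{defchic} itself, which after specialization to $\chi(G)=3$ reads: $\chi_c(G) \ne 3$ iff some rational $(2r+1)/(2s+1) > 1$ satisfies $\chi(G^{(2r+1)/(2s+1)})=3$. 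Stitching these together, a rational strictly greater than $1$ lies in $S$ iff $\chi_c(G) \ne 3$.

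Finally, a standard supremum argument converts the existence statement into a statement about $\theta(G)$: if $c > 1$ is in $S$ then $\theta(G) = \sup S \ge c > 1$, while conversely if no element of $S$ exceeds $1$ then (since $1 \in S$) $\theta(G)=1$. Together with $\theta(G) \ge 1$, this gives $\theta(G) = 1 \iff \chi_c(G) = 3$, as claimed. There is no serious obstacle here—the substantive work is packaged into Theorems \ref{defchic} and \ref{TICK}; the only step that deserves care is the supremum bookkeeping, ensuring that $\theta(G)=1$ really does rule out every rational in $S$ greater than $1$ rather than merely rationals arbitrarily close to $1$ from above.
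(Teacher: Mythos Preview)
Your argument is correct. The specialization of Theorem \ref{defchic} to $\chi(G)=3$ makes the threshold $\chi(G)/(3(\chi(G)-2))$ equal to $1$, and together with the lower bound $\chi(G^{(2r+1)/(2s+1)})\geq 3$ for rationals above $1$ (from Theorem \ref{TICK}), the equivalence drops out exactly as you describe. The supremum bookkeeping is fine: $1\in S$, and $\sup S>1$ if and only if $S$ contains some rational exceeding $1$. One small point you leave implicit is that any rational with $\chi(G^{(2r+1)/(2s+1)})=3$ automatically satisfies $(2r+1)/(2s+1)<og(G)$, since otherwise the power would have a loop; this is what guarantees that the rational produced by Theorem \ref{defchic} actually lies in $S$.

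The paper itself does not write out a proof environment for this theorem; instead the surrounding text indicates the intended argument. One direction (``$\theta(G)=1\Rightarrow\chi_c(G)=3$'') is meant to follow from Theorem \ref{CIRCOM} together with Lemma \ref{NOHOM}: if $\chi_c(G)<3$ then $G\longrightarrow K_{p/q}$ with $q\nmid p$ and $\chi(K_{p/q})=3$, whence $\theta(G)\geq\theta(K_{p/q})>1$. The converse is meant to follow from the externally cited fact that, for graphs of chromatic number $3$, $\chi_c(G)=3$ is equivalent to $G$ being colorful, combined with Theorem \ref{colorful}. Your route is different: you bypass both Theorem \ref{CIRCOM} and the colorful characterization, appealing instead to Theorem \ref{defchic} for both implications at once. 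This is more self-contained, since it avoids importing the ``well-known'' colorful equivalence from outside the paper; on the other hand the paper's route makes explicit the structural reason (colorfulness) behind the implication $\chi_c(G)=3\Rightarrow\theta(G)=1$.
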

The problem whether the circular chromatic number and the
chromatic number of the Kneser graphs and the Schrijver graphs are
equal has received attention and has been studied in several
papers \cite{DAHA3, HAZH, jhs, LILI, ME, SITA}. Johnson, Holroyd,
and Stahl \cite{jhs} proved that $\chi_c({\rm KG}(m,n))=\chi({\rm
KG}(m,n))$ if $m\leq 2n+2$ or $n=2$. This shows ${\rm
KG}(2n+1,n)$ is a colorful graph.

\begin{cor}
Let $n$ be a positive integer. Then, $\theta({\rm KG}(2n+1,n))=1$
\end{cor}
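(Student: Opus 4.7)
The plan is a direct chaining of results already established in the excerpt. First I would observe that by Lov\'{a}sz's theorem, $\chi({\rm KG}(2n+1,n)) = (2n+1) - 2n + 2 = 3$, so the Kneser graph ${\rm KG}(2n+1,n)$ is a non-bipartite graph whose chromatic number equals $3$. This places it squarely within the hypothesis of Theorem \ref{circular}.

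Next, I would invoke the theorem of Johnson, Holroyd, and Stahl (cited just above the corollary), which asserts that $\chi_c({\rm KG}(m,n)) = \chi({\rm KG}(m,n))$ whenever $m \leq 2n+2$. Since $m = 2n+1$ trivially satisfies $m \leq 2n+2$, this gives $\chi_c({\rm KG}(2n+1,n)) = \chi({\rm KG}(2n+1,n)) = 3$.

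Finally I would apply Theorem \ref{circular}: for any graph $G$ with $\chi(G) = 3$, one has $\theta(G) = 1$ if and only if $\chi_c(G) = 3$. Taking $G = {\rm KG}(2n+1,n)$, both hypotheses are met by the two preceding observations, and the conclusion $\theta({\rm KG}(2n+1,n)) = 1$ follows immediately. There is no real obstacle here; the corollary is simply a one-line consequence of Theorem \ref{circular} combined with the known value of $\chi_c({\rm KG}(2n+1,n))$. The only thing worth flagging is that the remark in the paper preceding the corollary (\emph{``This shows ${\rm KG}(2n+1,n)$ is a colorful graph''}) suggests an alternative route through Theorem \ref{colorful} rather than Theorem \ref{circular}; either path produces the same one-line proof.
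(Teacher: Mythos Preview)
Your proposal is correct and matches the paper's intended reasoning: the corollary is stated without an explicit proof, but the surrounding text makes clear it follows immediately from the Johnson--Holroyd--Stahl result $\chi_c({\rm KG}(2n+1,n))=3$ combined with either Theorem~\ref{circular} or, equivalently, the colorful route via Theorem~\ref{colorful}. You have identified both paths and there is nothing to add.
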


They also conjectured that the equality holds for all Kneser
graphs.

\begin{con}
\label{jhsconj} {\rm \cite{jhs}} For all $m \geq 2n+1$,
$\chi_c({\rm KG}(m,n))=\chi({\rm KG}(m,n))$.
\end{con}

\begin{qu}\label{CF}
Given positive integers $m$ and $n$ where $m\geq 2n$, is the
Kneser graph ${\rm KG}(m,n)$ a colorful graph? Is it true that
$\theta({\rm KG}(m,n))=1$?
\end{qu}

Theorem \ref{HOMB} shows that $\theta(H(m,n,k))\geq 2k-1$
whenever $m\geq 2n+1$. Another problem which may be of interest
is the following.

\begin{qu}\label{TK}
Given positive integers $m$ and $n$ where $m\geq 2n+1$, is it true
that $\theta(H(m,n,k))=2k-1$?
\end{qu}

Odd cycles are symmetric and they have sparse structure. Hence,
it can be useful if circular chromatic number can be expressed as
homomorphism to odd cycles. Now, let $G$ be a non-bipartite graph
and $t$ be a positive integer. Define,
$$f(G,2t+1)\isdef \max \{2n+1 | G^{1\over 2t+1}\longrightarrow
C_{2n+1}\}.$$

One can see that $3\leq f(G,2t+1) \leq (2t+1)\times og(G)$. In
view of proof of Theorem \ref{defchic}, one can compute
$f(G,2t+1)$ in terms of circular chromatic number of graph $G$
and vice versa. In fact, we have
$$\chi_c(G)=\inf\{{2n+1\over n-t}|
G^{1\over{2t+1}}\longrightarrow C_{2n+1}, n> t >0\}.$$ Moreover,
$$f(G,2t+1)=2\lfloor {1+ t \chi_c(G)\over \chi_c(G)-2}
\rfloor+1.$$

Also, note that there exists an necessary condition for the
existence of homomorphism to symmetric graphs in terms of
eigenvalue of Laplaican matrix. The next theorem can be useful in
studying circular chromatic number of graphs.

\begin{alphthm}{\rm \cite{DAHA1,DAHA2,DAHA3}}\protect\label{ALGH}
Let $G$ be a graph with $|V(G)|=m$. If $\sigma  \in
\hom[G,C_{2n+1}]$, then,
$$\lambda^{^{G}}_{_{m}} \geq \ \frac{2 |E(G)|}{2m} \
\lambda^{^{C_{2n+1}}}_{_{2n+1}},$$ where $\lambda^{^{G}}_{_{m}}$
and $\lambda^{^{C_{2n+1}}}_{_{2n+1}}$ stand for the largest
eigenvalues of Laplacian matrices of $G$ and $C_{2n+1}$,
respectively.
\end{alphthm}

\section{Concluding Remarks}
It is instructive to add some notes on the whole setup we have
introduced so far. It is evident from our approach that any kind
of information about power thickness of a graph has important
consequences on graph homomorphism problem. There are several
questions about power thickness which remain open. In fact, we
don't know whether the power thickness is always a rational
number.

\begin{qu}\label{RATI}
Let $G$ be a non-bipartite graph and $i \geq -\chi(G)+3$ be an
integer. Is $\theta_i(G)$ a rational number? Also, for which real
number $r > 1$ there exists a graph $G$ with $\theta_i(G)=r$.
\end{qu}

Finally, we consider the following parameter as a natural
generalization of power thickness and as a measure for graph
homomorphism problem.

\begin{defin}{Let $G$ and $H$ be two graphs. Set
$$\theta_{_{H}}(G) \isdef \sup\{{2r+1 \over 2s+1}| G^{2r+1 \over 2s+1} \longrightarrow H,\ {2r+1 \over 2s+1}< og(G) \}.$$}
\end{defin}

It is easy to show that for any non-bipartite graphs $G$ and $H$,
$\theta_{_{H}}(G)$ is a real number. Also, it is obvious to see
that there is a homomorphism from $G$ to $H$ if and only if
$\theta_{_{H}}(G)\geq 1$.\\
\ \\
{\bf Acknowledgement:} The authors wish to thank M. Alishahi and
M. Iradmusa for their useful comments.

\end{document}